\documentclass[11pt]{amsart}

\usepackage{amsmath,amsthm, amssymb, color}
\usepackage{graphicx}
\usepackage{fullpage}
\pagestyle{plain}
\usepackage[normalem]{ulem}

\usepackage{amsmath, amsthm, amssymb, bm, mathtools}
\usepackage{enumerate}

\newcommand{\R}{{\mathbb {R}}}

\newcommand{\Z}{{\mathbb Z}}


\newtheorem{Theorem}{Theorem}

\newtheorem {lemma} [Theorem]    {Lemma}
\newtheorem {corollary}  [Theorem]    {Corollary}

\newtheorem {proposition}[Theorem]    {Proposition}
\newtheorem {theorem}[Theorem]    {Theorem}
\newtheorem{remark}[Theorem]{Remark}

\newcommand {\eps}{\varepsilon}

\title{Intensity doubling for Brownian loop-soups in high dimensions}
\author{Titus Lupu and Wendelin Werner}
\address{CNRS/Universit\'e Paris-Sorbonne and University of Cambridge}

\begin{document}

\begin {abstract}
We derive an  intensity doubling feature of critical Brownian loop-soups on the cable-graphs of $\Z^d$ for $d \ge 7$ that can be described as follows: In the box $[-N, N]^d$ (and with a probability that goes to $1$ as $N$ goes to infinity), the set of all clusters of Brownian loops (in this Poissonian collection of such loops) that do contain proper self-avoiding cycles of diameter comparable to $N$ can be decomposed into two  identically distributed families: (a) The collection of clusters that do contain a large Brownian loop from the loop-soup (and therefore do automatically contain such a large cycle) (b) The collection of clusters that contain no macroscopic loop from the loop-soup (more specifically, no loop of diameter greater than $N^{\beta}$ when $\beta \in ( 4/ (d-2), 1)$ is fixed)  but nevertheless contain a large cycle. In particular, due to the fact that these two families are asymptotically identically distributed,  large cycles formed in case (b) by chains of small Brownian loops (i.e., all  of diameter smaller than $N^\beta$) will look like large Brownian loops themselves, and form a second independent ``ghost''  critical loop-soup in the scaling limit.
Reformulated in terms of the Gaussian free field on such cable-graphs, this shows that large cycles in the collection of its sign clusters will converge in the scaling limit to a Brownian loop-soup with twice the ``usual critical intensity''.
This result had been conjectured in \cite {L3}; our proof builds  heavily on the recently derived switching property for such critical cable-graph loop-soups from \cite {W3}.  \end {abstract}

\maketitle 

\section {Introduction}

\subsubsection*{Background on cable-graph loop-soups}
In transient cable graphs such as those of $\Z^d$ for $d \ge 3$, one can define a natural Brownian loop measure and perform a Poisson point process of such Brownian loops with intensity given by this particular measure.
This ``Brownian loop-soup'' (in the terminology of \cite {LW}) turns out to be closely related to the Gaussian Free Field (GFF) in that space (see \cite {LJ1,LJ2}) and to questions about the determinant of the Laplacian (see \cite {LJ2,L,WP,LJ3} and the references therein).

We will briefly recall the exact definition of the Brownian loop-soup in Section \ref {LSdef}, but let us already give some heuristic intuition about its discrete analog.  If one considers the discrete graph $\Z^d$, it is natural to consider the measure $\nu$ that puts a mass $(2d)^{-n(\gamma)}$ to each nearest-neighbour loop $\gamma$ with $n(\gamma)$ steps (the loop is here considered to be  ``not rooted'' i.e., with no marked starting point, and also ``not oriented'' so that a loop and its ``time-reversal'' are considered to be the same -- so it a nearest-neighbour from $\Z / n \Z$ into $\Z^d$ defined modulo circular time-shift and time-reversal).  The Brownian loop-measure $\nu_B$ that we will use here is the simply the Brownian analog of this measure on discrete unrooted unoriented loops, where one now considers Brownian loops on the cable graph (which is the union of all the one-dimensional edges joining neighboring points in $\Z^d$) instead of discrete random walk loops.

The Brownian loop-soup is then a Poisson point process of unrooted and unoriented Brownian loops with this particular intensity. In other words, random circular Brownian structures appear ``independently'' on the cable-graph, with intensity given by the Brownian loop-measure. This can appear somewhat convoluted at first sight, but from a ``continuum'' metric cable-graph perspective (where points/sites play no particular role), it is a particularly intrinsic percolation model. Indeed, Brownian paths are canonically defined from the metric and the Brownian loop measures furthermore remains the same if one multiplies the metric by any constant (which contrasts with usual Bernoulli bond-percolation on discrete graphs say, that can be viewed as coming from a Poisson point process of points/cuts on the cable graph, which is arguably the other very canonical model -- in this case, multiplying the metric by a constant amounts to changing the percolation parameter).

It turns out that if one samples such a  loop-soup on the cable-graph of $\Z^d$ for $d \ge 3$, for every given point $x$, there will almost surely exist infinitely many small loops that go through $x$ (but only finitely many loops of diameter greater than $\epsilon$, for any fixed $\epsilon$). On the other hand, it is not difficult to see there will almost surely exist many {\em exceptional} points on the cable graph that are visited by no loop of the loop-soup.

It is interesting to look also at Brownian loop-soups with intensities given by a multiple $c \not= 1$ of $\nu_B$, but the case $c=1$ that we are focusing on in the present paper has some important special features. It is worth stressing here to avoid confusion that in the literature (for instance in \cite {LJ1,L1}), one often starts with the natural measure on {\em oriented} loops, which corresponds to $\tilde \nu_B = 2 \nu_B$, so that the $c=1$ case that we are considering is the one with intensity $\alpha \tilde \nu_B$ for $\alpha =1/2$.

There are several related reasons that make it particularly natural and particularly physically relevant to then look at the clusters of Brownian loops (the connected components of the union of all the loops in the loop-soup) for $c=1$: One is the rewiring properties from \cite {W1} that indicate that the way in which clusters are subdivided/covered into individual loops is in some sense ``uniform and local'' -- this can be viewed as a ``bosonic feature'' of the model.  Another feature is the relation of the loop-soup with the Gaussian free field (GFF) and its correlations.
There has been a rather intense activity on this particular model on cable graphs in this past decade -- references include \cite {Aid1, CD0, CD00, CD1, CD2, CD3, DW, DPR0, DPR1, DPR2, DPR, GJ, GN, L1, L2, L3, LST, LW1, LW3, P, W1, W2, W3}.
Results that will be particularly useful in the present paper are the explicit formula for the probability that two given points $x$ and $y$ belong to the same cluster of Brownian loops (we denote this event by $x \leftrightarrow y$) in terms of the Green's function $g(x,y)$ in the graph (see \cite {L1}) or the switching property (see \cite {W3}) that allows to describe the law of the clusters when one conditions on the event $x \leftrightarrow y$. Both these features are specific to this particular percolation-of-loops model. One can interpret these specific features as part of the reason why this percolation model is physically natural, or as the technical reason for which one ends up being able to say more about it than for ordinary percolation.
We choose to first present our main result in the Brownian loop-soup framework  -- we will briefly come back to what it says about the GFF at the end of this introduction.

These loop-soup clusters are of interest in any dimension (in the planar case, this ends up being part of the conformal invariance SLE/CLE realm, see for instance \cite {SW}). The focus of the present paper with be the case where the dimension of space is high (i.e., $d \ge 7$ in the case of $\Z^d$). There, the proportion of large Brownian loops versus small Brownian loops in a given large box will be smaller and the clusters of Brownian loops (the connected components of the union of all Brownian loops) will (or are believed to) share most features of the clusters of critical Bernoulli percolation in that spatial dimension. For instance, the probability that two given points $x$ and $y$ are in the same cluster decays like a constant times $1/|x-y|^{d-2}$ when $|x-y| \to \infty$ (this follows directly from the explicit formula mentioned above in the case of loop-percolation) -- the same result is known to hold for nearest-neighbour Bernoulli percolation for $d \ge 11$ and sufficiently spread-out Bernoulli percolation for $d \ge 7$, see \cite {HvdH} and the references therein. Some recent references dealing with this high-dimensional regime of Brownian loop-percolation include \cite {W2,CD1,DPR,GJ,CD2} (see more generally the recent paper \cite {CD2} for a more complete reference list).

\subsubsection*{Loop-soups in high dimensions}
The main goal of the present paper is to derive a novel and somewhat surprising feature of this model in such high dimensions that also provides new insight into critical models in high-dimensions in general. Before describing it, let us recall some further simple observations from \cite {W2}.
Consider a critical Brownian loop-soup in $\Lambda_N := [-N,N]^d$ (viewed as a cable graph with the edges of $\Z^d$ in that box) for $d \ge 7$. More precisely, this is the collection all the Brownian loops in a loop-soup in $\Z^d$ that stay in $\Lambda_N$ (it can also be defined directly in $\Lambda_N$ where the boundary of the box plays the role of killing points for the Brownian motion).
Then:
\begin {itemize}
 \item On the one hand, the definition of the loop-soup shows immediately that for any $\eps$, the number of Brownian loops of diameter greater than $\eps N$ is tight [it converges in law to a Poisson random variable as $N \to \infty$]. So, there will for instance typically exist only a handful of Brownian loops of diameter greater than $N/2$.
 \item On the other hand, if one looks at the set of large clusters, the situation will be very different. Indeed, just as in the case of ordinary critical Bernoulli percolation in high dimensions (see \cite {Ai}), it is possible (and in that case not difficult, see \cite {W2}) to show that ``large clusters will proliferate'' (in the terminology used by Michael Aizenman in \cite {Ai}) meaning that for each small $\eps$, the number $n_N=n_N (\eps)$ of clusters of diameter greater than $\eps N$ in $\Lambda_N$ will explode like $N^{d-6 + o(1)}$.
 \item
 It therefore follows (see \cite {W2}) that most of these clusters will contain no Brownian loop of diameter greater than $N^\alpha$ at all, as soon as $\alpha > 6/d $. This is simply due to the fact that there are of order $N^{d(1- \alpha)}$ such loops, while the number of macroscopic clusters (which is $N^{d-6 +o(1)}$) is much larger. As we shall see in the paper, it is actually also very easy to see that (typically, when $N \to \infty$) no macroscopic cluster will contain more than one macroscopic Brownian loop.
\end {itemize}
In a nutshell, this simple argument indicates that a ``typical'' large cluster will be composed only of Brownian loops of diameter smaller than $N^\alpha$ [as we shall see, it is in fact not difficult to show that this will actually be the case for all $\alpha > 4 / (d-2)$]. This type of ideas/results has been used and developed in subsequent recent work, such as \cite {CD1}.

\subsubsection*{Results of this paper}
We will be interested in the special {\em exceptional} macroscopic clusters that happen to be topologically non-trivial at macroscopic scale, i.e., that contain ``proper'' macroscopic cycles (i.e. large self-avoiding cycles that are not close to be tree-like at macroscopic scale, in a sense that we will make precise) -- to avoid confusion, we will try to use the word {\em loops} for Brownian loops in a loop-soup and the word {\em cycles} for subsets of clusters (that are not necessarily Brownian loops in the loop-soup). A macroscopic Brownian loop will typically itself contain such a macroscopic ``proper cycle'', so that (combined with the fact that no cluster will contain more than one macroscopic loop) we can infer that the number  macroscopically topologically non-trivial clusters will be at least as big as the number of macroscopic Brownian loops. The question is to describe (if they exist!) the number and shape of those large cycle-containing-clusters that do {\em not} contain any large Brownian loop from the loop-soup (so that the cycle would have to be created by a chains of smaller loops).

In a nutshell, the answer will be that such clusters will exist, and that they will be {\em as numerous and have a similar shape as the set of clusters that do contain a large Brownian loop.}
In somewhat less loose terms (a precise formal statement will be given in the next section): For any $d \ge 7$, if $\beta \in (  4 / (d-2), 1)$  is fixed, then, for each $\eps >0$,
in the limit when $N \to \infty$, one can decompose the set of clusters in $[-N,N]^d$ that do contain  ``proper'' (i.e., not close to tree-like) macroscopic cycles with diameter greater $\eps N$ into two (almost) identically distributed families:
\begin {enumerate}
 \item The set of clusters that contain exactly one macroscopic Brownian loop of the loop-soup (and this Brownian loop will already contain a cycle, which will be essentially the only cycle present in the cluster) as discussed above -- this case is depicted on the left of Figure \ref {fig:1},
 \item the set of clusters that do contain no Brownian loop of diameter greater than $N^\beta$ (as depicted on the right of Figure \ref {fig:1}),
\end {enumerate}
and there will be no other ones (with a probability that goes to $1$ as $N \to \infty$).

One main point in this statement is that the set of clusters satisfying (1) and (2) are (almost) identically distributed. So, the number of clusters of type (2) will converge to a Poisson random variable and chains of loops of size smaller than $N^\beta$ will form a collection of cycles that will look like a Poisson cloud of macroscopic Brownian loops.

In fact, for each individual cluster,
the choice of being of the first or second kind is made by (almost) fair independent coins (one for each cluster -- we will explain that the independence follows readily from the Poissonian feature of the loop-soup); a more precise way to formulate this $N \to \infty$ statement is that, with a probability that tends to $1$ as $N \to \infty$, when one conditions on all these macroscopic clusters, the conditional probabilities for each cluster that contains a macroscopic cycle to be of type (1) and (2) respectively will both be  $1/2 + o(1)$, and the conditional probability of being in neither case will be $o(1)$.

In fact, for Case (1), the clusters will also contain no Brownian loop of diameter greater than $N^\beta$ other than the one of diameter larger than $\eps N$ that it already contains by definition of this case.
In Case (2), the large cycles will in fact be created by Brownian loops of diameter much smaller than $N^{\beta}$ (i.e., with probability going to $1$, they will be created by loops smaller than $N^\gamma$  for any given  $\gamma \in (2/ (d-4) , \beta)$ (i.e., loops of diameter larger than $N^\gamma$ but smaller than $N^\beta$ can be in the cluster, but will not contribute to the creation of the large cycle).

\begin{figure}[h]
  \centering
  \includegraphics[width=1\textwidth]{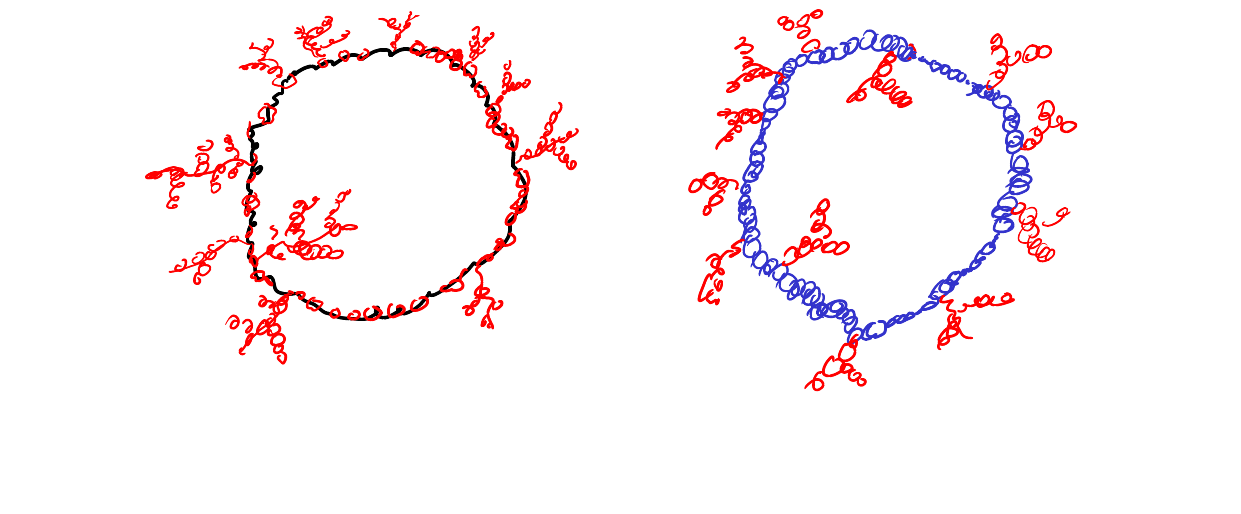}
  \caption{\label{fig:1}Each cluster topologically non-trivial cluster containing a large cycle will be decomposed in one of these two ways with (almost) equal probability: Either the cycle is due to one big Brownian loop (left picture), or the cycle is created by a chain of very small Brownian loops (right picture).}
  \end{figure}

Furthermore, we will see that one can define in some deterministic way a macroscopic cycle in each of these macroscopic clusters in such a way that the collection of these cycles will converge (in the scaling limit, i.e., when rescaled by $1/N$) to a continuum Brownian loop-soup in  $[-1, 1]^d$ with twice the intensity of the usual standard loop-soup. Half of the cycles will correspond to the actual macroscopic loops in the loop-soup that one started with, and half of the cycles will correspond to the macroscopic cycles created by long chains of small Brownian loops (of diameter smaller than $N^{\gamma}$ each).

As explained in \cite {L1}, clusters of critical Brownian loop-soups can be equivalently  described as the excursions sets away from $0$ of a Gaussian Free Field (GFF) on the cable graph. This makes it possible to formulate a consequence of our main result in terms of the GFF  as follows:  Large proper cycles in the excursions sets of the GFF in the cable graph of $[-N,N]^d$ will converge in the scaling limit to a Brownian loop-soup with intensity equal to twice that of the usual critical loop-soup in $[-1, 1]^d$.
This is the ``intensity doubling conjecture'' formulated in \cite {L3} (based on some formulas for ``twisted'' loop-soups and Gaussian free fields) -- that is therefore proved in the present paper for all $d \ge 7$.

\medbreak

Let us say a few words about how our we will derive these results: Our proofs will be heavily based on one of the switching properties derived in \cite {W3} for such loop-soups applied to loop-soups in well-chosen subgraphs of $[-N,N]^d$. They provide an illustration of how these switching properties can be used to obtain rather directly results about the geometry of loop-soup clusters that are either out of reach or technically very challenging for other percolation models. Here is a flavour of some of the main ideas here: When a loop-soup cluster $C$ in $[-N,N]^d$  contains a large cycle $\Gamma$ that wraps around some $(d-2)$-dimensional affine subspace $\Delta$ (this is the $d$-dimensional analog of a cycle wrapping around a line in three dimensions) while staying at distance $\eps N$ from $\Delta$, either $\Gamma$ intersects a large Brownian loop (which is part of the loop-soup, it is then part of that cluster) of diameter greater than $\eps N /2$, or it does not intersect any such Brownian loop, in which case the cycle $\Gamma$ will be part of a cluster of the loop-soup in the sub-cable-graph of $[-N,N]^d$  obtained by removing all points that lie at distance smaller than $\eps N /2$ from $\Delta$. The cycle-version of the switching property from \cite {W3} applied to this restricted loop-soup  then implies that in the latter case, the conditional probability that there exists a Brownian loop that wraps around $\Delta$ while staying at distance at least $\eps N /2$ from it is at least $1/2$. So, we see that (in both options) the conditional probability that $C$ contains a Brownian loop of diameter greater than $\eps N /2$ is at least $1/2$. A more refined analysis of the shape and structures of the clusters (this will involve looking at loop-soups in the universal cover of domains like $\Z^d \setminus \Delta$) that do contain large Brownian loops will then (roughly) indicate that the only proper cycles that they contain will be very close to these Brownian loops themselves, which will allow to deduce the above dichotomy from the switching property.

\medbreak

Let us finally comment about what this result says for critical models in high-dimensions in general: For many critical models from statistical physics, there is usually an ``upper-critical dimension'' of space above which the model will behave in a ``mean-field way''. This is often related to the inter-related facts that (1) the correlation functions behave like that of the GFF, (2) the geometric underlying macroscopic clusters tend to proliferate and there is no geometric scaling limit, (3) these clusters will typically be tree-like on large scale (with no macroscopic loops). What the present paper shows is that in fact, if one looks at the exceptional clusters that are topologically non-trivial (instead of looking at all/typical clusters), scaling limits will in fact arise in the form of Brownian loop-soups. See the paper \cite {CW} (and further forthcoming work) for the case of ordinary percolation.

\section {The setup and the more precise statement of the main result}

\label {LSdef}
As promised, we now briefly the definition of the Brownian loop-soup on transient cable-graphs: The cable-graph consists of the union of all the closed edges of the graph (viewed as one-dimensional segments), and it is then easy to define Brownian motion on this locally one-dimensional structure (when the Brownian motion is at a vertex where several edges meet, it essentially tosses a uniform coin to decide in which of the edges to move when it makes an excursion away from the vertex). From this, one can then define for each point $x$ in the cable-graph, a Brownian excursion measure $\mu_x$ away from $x$ in the cable graph, which is a measure on parameterized oriented loops $\gamma$ from $x$ to $x$ (we call this a loop that is rooted at $x$)-- a loop $\gamma$ has a positive and finite time-duration $T(\gamma)$. The total mass of $\mu_x$ is then the Green's function $G(x,x)$ in the cable graph. In the set-up that we are looking at in the present paper, it is natural to consider the loops to be ``non-oriented'' meaning that a loop and the loop traced backwards will be considered to be the same. This makes it natural to consider the measure $\mu_x / 2$ instead of $\mu_x$.  The natural loop-measure is  then defined by
$$ \mu (A) := \int_{\mathcal G} dx \int \frac{d\mu^x (\gamma)}{2} \frac{ 1_{\gamma \in A}}{T(\gamma)},.$$
where we view $\mu$ as a measure on the set of unrooted and unoriented loops -- i.e., two rooted oriented loops are identified if they are obtained by monotone circular time-reparameterization of each other) and $dx$ denotes the one-dimensional Lebesgue measure on ${\mathcal G}$. The term $1/2$ can be seen as coming from the oriented/unoriented feature mentioned in the previous section, while the $1/T(\gamma)$ term corresponds to the idea that an unrooted loop of time-length $T$ has $T$ times more options for choosing its root than a loop of time-length $1$ (so that this term is here to compensate the natural overcounting due to that -- one can view the point $x$ in the previous construction as being chosen uniformly (with respect to time) on the unrooted loop). One standard feature (going back to the the original construction from \cite {LW} in the continuum) is that it is easy to choose to ``root'' a loop at some special point (for instance, the point where the first coordinate is the largest) and to then describe the law of the loop viewed from that point. One can also (and we will use this in the present paper) choose some point $x$ on the cable-graph and describe the measure $\mu$ restricted to the set of loops that go through $x$ in terms of Brownian loops ``rooted at $x$''.

So, in a nutshell, $\mu$ is a natural measure on unoriented unrooted loops in the cable-graph, and the loop-soup that we will consider in the present paper is a Poisson point process with intensity $\mu$. Heuristically speaking, this means that loops appear independently in a loop-soup, with an ``intensity'' provided by $\mu$. As mentioned in the introduction, when the cable-graph is transient, it will turn out that (a) for every given point in the cable-graph, there will almost surely be infinitely many small Brownian loops in the loop-soup that go through this point, but only finitely many ones of diameter greater than $1/2$ say, and (b) there will be exceptional points that are visited by none of the loops in the loop-soup (let us call $Z$ this set). We can then define clusters of Brownian loops to be the connected components of the unions of all loops in the loop-soup (or equivalently the connected components of the complement of $Z$).

Throughout the paper, when we talk about the loop-soup in $\Lambda_N = [-N,N]^d$, we will mean a loop-soup in the cable-graph of $\Z^d$ restricted to the set of loops that stay in $\Lambda_N$ (as noted in the final section, our results can then be directly used to obtain the similar results if one considers a loop-soup in the whole of $\Z^d$ and considers the large cycles or the large cycles it creates that are contained in $\Lambda_N$.

\medbreak

To state the result precisely, we need to define our set of ``proper non-tree-like'' cycles, which is what we now do:
One way to proceed is to fix some small $\eps >0$ and to define ${\mathcal C}(\eps, N)$ to be the set of loop-soup clusters (for the cable-graph loop-soup in $\Lambda_N:= [-N, N]^d$ for $d \ge 7$) that do contain a simple cycle $\Gamma$ such that there exists a plane $\Pi$ such that the orthogonal projection of $\Gamma$ on $\Pi$ has a non-trivial index around some point $x$ in this plane and also stays at distance at least $\eps N$ from it (so in particular, this cycle $\Gamma$ has necessarily a diameter of at least $2 \eps N$). This definition ensures that the cycle is macroscopic and also not tree-like at macroscopic level (as it circumnavigates around an entire ``tube'').

We will similarly define the collection ${\mathcal B}(\eps, N)$ of single loops that do contain a simple cycle with the same conditions. So, if a cluster contains a Brownian loop that belongs to ${\mathcal B}(\eps, N)$, this cluster will automatically belong to ${\mathcal C}(\eps, N)$.

Recall that when $d \ge 4$ (and therefore when $d \ge 7$), Brownian motion in $\R^d$ is almost surely a simple curve (i.e., with no double points) and that a large random walk loop in $\Z^d$ will in fact typically contain a self-avoiding cycle of diameter comparable to that the loop. Furthermore, since the projection of the loop on the plane $\R^2 \times \{ 0\}^{d-2}$ is a two-dimensional loop, one will be able to find a point $x$ in this plane, such that this projection has a non-trivial index around it and stays at distance comparable to $N$ from it (this corresponds to properties of two-dimensional Brownian motion). In other words, all Brownian loops of diameter at least $\eps' N$ in the loop-soup will end up being in ${\mathcal B} (\eps, N)$ for some $\eps$ comparable to $\eps'$ (i.e., the probability will converge to $1$ when $\eps'$ is fixed and $\eps \to 0$, uniformly with respect to $N$).

We will condition on the clusters in ${\mathcal C}(\eps, N)$ (viewed as sets, not as collection of individual loops) and discuss aspects of the conditional distribution of the Brownian loops in each of these clusters.

An important first observation  is that the ways in which the different loop-soup clusters are decomposed/covered by Brownian loops are conditionally independent (i.e., the conditional law of the decomposition of $C_i$ given all of the $C_j$'s is a function of $C_i$ only).
One way to justify this goes as follows: We know that any two different loop-soup clusters on a cable-graph are always at positive distance from each other (this is just due to the relation with the GFF on the cable graph -- excursions of the GFF can be made to coincide with the collection of clusters --, combined with the fact that the GFF on the cable graph has no isolated zeros -- indeed its restriction to each edge is distributed like a one-dimensional Brownian bridge). It therefore follows readily from the Poissonian construction of the Brownian loop-soup that if one observes a finite set of (different) loop-soup clusters $C_1, C_2, \ldots, C_n$, the $n$ collections $(L_1, \ldots, L_n)$ of Brownian loops that constitute $C_1, \ldots, C_n$ are conditionally independent [here $L_j$ is the collection of all the Brownian loops in the cluster $C_j$].
More precisely, if we fix $n$ points $x_1, \ldots, x_n$ on the cable graph and define $C_1, \ldots, C_n$ to be the clusters that contain these points, then on the event that $C_1 \subset I_1, \ldots, C_n \subset I_n$ for disjoint closed sets $I_1, \ldots, I_n$, each $C_i$ and its decomposition into Brownian loops can be read off from the collection of loops that stay in $I_i$. Since these $n$ collections of loops are independent, the collections $(L_1, \ldots, L_n)$ are indeed conditionally independent on this event.

We can now properly formulate the main statement of this paper as a comparison/relation between the set ${\mathcal C}(\eps, N)$ and the subset of  ${\mathcal C}(\eps, N)$ consisting of those clusters that happen to contain a large Brownian loop in ${\mathcal B}(\eps , N)$ (that already ensures by itself that its cluster is in ${\mathcal C}(\eps, N)$).

\begin {theorem}[Intensity doubling for $d \ge 7$]
\label{thm}
For each $\eps > 0$, the number of clusters in ${\mathcal C}(\eps, N)$ remains tight when $N \to \infty$ and does converge in law to a Poisson random variable. Furthermore, for each $\beta
> 4 / (d-2)$ and $\gamma > 2 / (d-4)$,  one can find an explicit deterministic $u_N \to 0$ such that the following happens with a probability that tends to $1$ as $N \to \infty$: For each cluster $C \in {\mathcal C} (\eps, N)$:
\begin {enumerate}
 \item The conditional probability (given the cluster) of the following event is in $[1/2 - u_N, 1/2 + u_N]$: $C$ contains exactly one Brownian loop in ${\mathcal B}(\eps, N)$ and no other Brownian loop of diameter greater than $N^\beta$.
 \item The conditional probability (given the cluster) of the following event is in $[1/2 - u_N, 1/2 + u_N]$: $C$ contains no Brownian loop of diameter greater than $N^\beta$, and it contains a large cyclic chain of Brownian loops that are all of diameter smaller than $N^\gamma$ that ensures that $C$ is in ${\mathcal C}(\eps, N)$.
 \item The conditional probability (given the cluster) that neither (1) nor (2) holds is therefore smaller than $2u_N$.
\end {enumerate}
Furthermore, for each cluster $C$ in ${\mathcal C}(\eps, N)$, one can define deterministically a cycle $\Gamma(C)$ in $C$ in such a way that the probability that
 there exists $C$ in ${\mathcal C}(\eps, N)$ for which the cluster contains a Brownian loop in ${\mathcal B}(\eps, N)$ that is at Hausdorff distance greater than $N^\beta$ of $\Gamma (C)$ is smaller than $u_N$ [so in other words, if one is in case (1), then the large Brownian loop has to be close in Hausdorff distance (compared to the scale $N$) to the given cycle $\Gamma (C)$ with a very high probability).
\end {theorem}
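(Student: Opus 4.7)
The plan is to derive the theorem from the cycle version of the switching property of \cite{W3} applied to Brownian loop-soups in carefully chosen subdomains of $\Lambda_N$, following the heuristic sketched at the end of the introduction. The first step is to associate to each cluster $C \in \mathcal{C}(\eps,N)$ a deterministic geometric scaffold: a triple $(\Pi(C), x(C), \Delta(C))$, where $\Pi$ is drawn from a fixed finite list of planes, $x$ lies on an $\eps N/10$-grid inside $\Pi$, and $\Delta = x + \Pi^{\perp}$ is the associated $(d-2)$-affine subspace, such that $C$ admits a simple cycle whose projection to $\Pi$ has non-trivial index around $x$ and stays at distance $\geq \eps N$ from $x$. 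A canonical such cycle $\Gamma(C)$ (e.g.\ lex-minimal in a fixed enumeration) is then selected, and $T(C)$ is taken to be the tube of radius $\eps N/3$ around $\Delta(C)$.

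The core of the proof is then to apply cycle-switching in the sub-cable-graph $\Lambda_N \setminus T(C)$. Thanks to the Poissonian structure of the loop-soup, the configuration decomposes into independent sub-loop-soups $\omega^{\mathrm{in}}$ (loops visiting $T(C)$) and $\omega^{\mathrm{out}}$ (loops contained in $\Lambda_N \setminus T(C)$). On the event that $C$ contains no Brownian loop of diameter $\geq \eps N/2$ meeting $T(C)$, the whole cluster is determined by $\omega^{\mathrm{out}}$; conditionally on $\omega^{\mathrm{in}}$, the switching property of \cite{W3} produces an almost measure-preserving involution that exchanges configurations in which $\Delta(C)$ is wrapped by a single large loop of $\omega^{\mathrm{out}}$ with configurations in which the wrapping is built only from a chain of small loops. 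This directly yields the $1/2 \pm u_N$ balance of cases (1) and (2).

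The exponents $\beta > 4/(d-2)$ and $\gamma > 2/(d-4)$ are calibrated through standard first-moment estimates on the Brownian loop measure: the expected number of loops of diameter $\geq r$ in $\Lambda_N$ is of order $N^d r^{-(d-2)}$, and combined with the fact that there are only $N^{d-6+o(1)}$ macroscopic clusters, a union bound shows that with probability $1-o(1)$ no cluster of $\mathcal{C}(\eps,N)$ carries two loops of diameter $\geq N^\beta$ and no macroscopic wrapping is produced by loops of diameter $\geq N^\gamma$. The Hausdorff closeness of the big Brownian loop to $\Gamma(C)$ in case (1) is obtained by lifting the loop-soup to the universal cover of $\Z^d \setminus \Delta(C)$, as hinted in the introduction: in high dimension, the cluster containing the big loop has a tree-like structure away from that loop, so any non-tree-like cycle must shadow it within Hausdorff distance $O(N^\beta)$. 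Once the $1/2 \pm u_N$ decomposition is in hand, tightness and Poisson convergence of $|\mathcal{C}(\eps,N)|$ follow from the immediate Poisson convergence of $|\mathcal{B}(\eps,N)|$, which is itself built into the Poissonian definition of the loop-soup.

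The main obstacle is producing a quantitative and \emph{uniform} version of the switching step: one must apply cycle-switching simultaneously across all admissible scaffolds and all macroscopic clusters, and control the Radon--Nikodym factor between ``one big loop'' and ``chain of small loops'' configurations to be $1 + o(1)$ in the scaling regime. Achieving this with an explicit error rate $u_N \to 0$ is precisely what forces the threshold conditions $\beta > 4/(d-2)$ and $\gamma > 2/(d-4)$, and is what will ultimately be responsible for the independent ``ghost'' loop-soup appearing in the scaling limit.
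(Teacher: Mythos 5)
Your high-level plan correctly identifies the two main tools (the cycle-switching property applied in a deleted tube around a wrapping subspace, and the lift to the universal cover of $\Z^d\setminus\Delta$), but the way you describe the switching step contains a genuine misconception, and bridging it is in fact the technical heart of the paper.

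You write that, conditionally on the loops meeting the tube, ``the switching property \dots\ produces an almost measure-preserving involution that exchanges configurations in which $\Delta(C)$ is wrapped by a single large loop of $\omega^{\mathrm{out}}$ with configurations in which the wrapping is built only from a chain of small loops. This directly yields the $1/2\pm u_N$ balance.'' This is not what the switching identity says. The statement from \cite{W3} (Proposition~\ref{switch}) is purely about the \emph{parity} of the total winding index $\Sigma$ of the Brownian loops in a cluster: conditionally on the cluster, $P[\Sigma/i(C,\Delta)\text{ even}]=1/2$. Odd parity forces the existence of at least one loop with non-zero winding, so one gets for free that the cluster contains a macroscopic loop with conditional probability \emph{at least} $1/2$. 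But even parity does not rule out macroscopic loops (two of them with cancelling indices would do), and neither does it place you in the ``chain of small loops'' scenario. In particular, the switching gives you only a lower bound; it does not ``exchange'' one case with the other. To upgrade ``$\ge 1/2$'' to ``$=1/2+o(1)$'' the paper needs two additional inputs: (i) Lemma~\ref{Lnotwo}, which rules out two mesoscopic loops in the same cluster, and (ii) the key Proposition~\ref{P7}, which samples \emph{two independent} conditional copies $B^1(C)$ and $B^2(C)$ of a large loop in $C$ and shows they are Hausdorff-close, hence have the same winding index; this ``freezes'' the index and makes the parity argument conclusive. Your proposal skips this entirely, and the $1/2\pm u_N$ claim therefore does not follow from what you have written.

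Relatedly, the Hausdorff-closeness of the big loop to $\Gamma(C)$ is not a by-product you can obtain from a loose ``tree-like away from the loop'' assertion: it \emph{is} Proposition~\ref{P7}, the most delicate step of the argument, proved by a resampling trick (delete $B^1_k$ with constant conditional probability), the no-pinching and no-long-excursion estimates (Lemmas~\ref{L1} and~\ref{L2}), and a comparison with a loop-soup in the universal cover to bound the probability of a macroscopic excursion with $N^\gamma$-close endpoints by $O(N^{2+2\gamma-(d-2)})$. Your universal-cover heuristic points in the right direction but is stated as a conclusion rather than derived. Finally, a small quantitative slip: the expected number of loops of diameter $\ge r$ in $\Lambda_N$ scales like $(N/r)^d$, not $N^d r^{-(d-2)}$; what scales like $N^d r^{-(d-2)}\cdot r^{-0}\sim N^d r^{-(d-2)}$ only after further manipulations is the number of \emph{lattice points} covered, with an extra $r^2$ factor. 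This doesn't affect the high-level plan but would matter if you carried the exponents through.
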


The closer $\beta$ and $\gamma$ are  to the values $4/(d-2)$ and $2/(d-4)$ (respectively), the stronger the statements (1)-(2)-(3) are.
The statement is not void since one can always find $\beta$ and $\gamma$ smaller than $1$ that satisfy the conditions in the theorem when $ d \ge 7$, while (and this is not a surprise here) when one formally plugs in $d=6$ into the formula, the bounds $4/(d-2)$ and $2/(d-4)$ both  take the critical value $1$. The range of admissible values for $\beta$ and $\gamma$ increases with $d \ge 7$. In particular, any values $\beta > 4/5$ and $\gamma > 2/3$ will work for all $d \ge 7$. As we will explain at the end of the paper, the values of these two thresholds $4/(d-2)$ and $2 / (d-4)$ are not surprising and can be heuristically explained rather simply.

\medbreak

We conclude this section with the following remark related to the link with the GFF:
\begin {remark}
 We could also (as in \cite {W3}) additionally condition on the total occupation times of the Brownian loops on the clusters on top of the clusters themselves, and the very same result then still holds (i.e., in the limit, for each of the macroscopic cycle-containing clusters, the conditional probability that it contains a macroscopic loop will tend to $1/2$ independently of the local time profile) -- this will simply be due to the fact that the switching property that we will use is also valid when one does additionally condition on the local time profile.
Recall that this local time profile is very natural in view of the relation to the Gaussian Free Field on the cable graph (the profile is the square of the GFF and the clusters will be the excursion sets away from $0$ of the GFF).
\end {remark}

\section {Preliminary estimates}

Let us first collect some rather simple preliminary facts about large Brownian loops within a loop-soup and loop-soup clusters in $\Lambda_N:=[-N,N]^d$ for $d \ge 7$. In this section, we will not yet focus on cycle-containing clusters.

\subsection {A first remark/warm-up}

We will repeatedly and implicitly use the following type of simple estimates in our discussions and proofs: Suppose that one considers a loop-soup and wants to estimate the probability that a given integer point (i.e., a point with integer coordinates) is part of a Brownian loop of diameter at least $N^a$ for some $a <1$. This probability is bounded by the expected number of loops of that diameter that it is part of, and this expected value is just the total mass (for the Brownian loop measure) of the set of loops that pass through $x$. This total mass can then simply be expressed by choosing to root these loops at $x$ (see for instance \cite {WP}), and can then be bounded by a constant times the probability that a random walk started from $x$ reaches distance $N^a /4$ from $x$ and then comes back to $x$ (before escaping to infinity). One therefore readily gets an upper bound of a constant times $1/N^{a(d-2)}$.
Summing over all integer points in $\Lambda_N$ gives an upper bound of a constant times $N^{d(1-a)} \times N^{2a}$. This corresponds to the intuition that the main contribution to the number of points that belong to Brownian loops of diameter greater than $N^a$ will come from loops of diameter of order $N^a$ -- there are $N^{d(1-a)}$ of them and each will have a mean number of points of order $N^{2a}$.

Similarly, we will be looking at quantities like the expected sum over all loops of diameter at least $N^a$ in $\Lambda_N$ of the number of pairs of integer points on this loop. The previous intuition indicates that this quantity will be bounded by a constant times $N^{d(1-a)} N^{4a}$. This can indeed be easily proven in a similar way as before: For each $x$ and $y$, we can bound the mass of the set of loops of diameter greater than $N^a$ that go through $x$ and $y$ by comparing it with the probability that a random walk starts from $x$, then reaches distance $N^a/4$ from $x$ (if $y$ is at distance smaller than $N^a /8$ from $x$, otherwise we can leave this one step out), then visits $y$ and then comes back to $x$, which gives a bound of a constant times $(1/N^{a(d-2}) \times (1 / | x-y|^{d-2})$, that indeed provides the right bound when summing over all $x$ and $y$'s.

\subsection {The shape of large Brownian loops}

The following two types of facts will be helpful in our proof:

\medbreak

The projection of a long  cable graph Brownian loop (i.e. time of order $N^2$ or diameter of order $N$) on a two-dimensional coordinate plane (say the first two coordinates) will look like a long two-dimensional loop on the square lattice. It will therefore have a non-zero index around many points and have both non-zero index and stay at distance comparable to $N$ i.e. greater than any $o(N)$ of many points (with high probability as $N \to \infty$). There are many ways to formulate this precisely. One fact that we will use at some point is that for all fixed $a<1$,  with high probability as $N \to \infty$, all Brownian loops of diameter greater than $\eps N$ in the loop soup have the property that their projection on the first two coordinates has a non-zero index around some point (the point can depend on the loop) while staying at distance at least $4N^a$ from it.

\medbreak
The second type will be related to pinching points: A long Brownian loop on the cable graph of diameter comparable to $N$ will not be ``close to pinching'', meaning that the probability that one can find four points $x$, $y$, $x'$, $y'$ on the loop that are ordered circularly with respect to the time-parametrization of the  loop  with $d(y,y') \le N^{c}$ while $d(y,x) \ge N^{b}$ and $d(y,x') \ge N^{b}$ is going to $0$ (provided $b$  has been chosen close enough to $1$ once $c<1$ is given).
This statement can  be strengthened into statements about ``all Brownian loops of size greater than $N^{a}$ in the loop-soup in $\Lambda_N$ are not close to pinching'' in the following sense, that will be instrumental in our proofs:

\begin {lemma}[No $(a,b,c)$ pinching, see Figure \ref {fig:2}]
\label {pinch}\label {L1}
 Suppose that $a,b,c$ with $c < b \le a$ are in the interval $(0,1)$ and are such that $ \nu:= (a+b)(d-2) -( d + c (d-4)) > 0$ [we will refer to this condition as the (a,b,c) condition]. Let  ${\mathcal E}_1 (a,b,c,N)$ denote the event that there exists a Brownian loop in the loop-soup in $\Lambda_N$ with the property that there are points $x=B(i)$ and $y=B(j)$ on the loop that lie at distance smaller than $N^c$ from each other and such that the two portions of the loop between these two points (so the two arcs joining $x$ and $y$) respectively have a diameter greater than $N^a$ and $N^b$. Then, $P[ {\mathcal E}_1 (a,b,c, N)]  \le O( N^{-\nu})$ as $N \to \infty$.
\end {lemma}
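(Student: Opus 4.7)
The plan is to prove the lemma via a first-moment estimate against the Brownian loop measure $\mu$. By Markov's inequality, it suffices to show that the expected number of loops in the loop-soup on $\Lambda_N$ that exhibit an $(a,b,c)$-pinching is $O(N^{-\nu})$.

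The core estimate will be the following: for each pair of integer points $x, y \in \Lambda_N$ with $|x-y| \le N^c$, we bound by $O(N^{-(a+b)(d-2)})$ the $\mu$-mass of loops having $(x, y)$ as a pinching pair --- i.e.\ loops visiting $x$ at some time $s_1$ and $y$ at some later time $s_2$, with the two arcs $\gamma[s_1, s_2]$ and $\gamma[s_2, T] \cup \gamma[0, s_1]$ of diameters at least $N^b$ and $N^a$ respectively. This will follow from the Markov-property decomposition explained in the warm-up of Section~3.1: the loop splits at its two visits into two bridges, and the bridge from $x$ to $y$ of diameter $\ge N^b$ (resp.\ from $y$ to $x$ of diameter $\ge N^a$) contributes a Green-function-at-distance-$N^b$ (resp.\ $N^a$) factor of order $N^{-b(d-2)}$ (resp.\ $N^{-a(d-2)}$). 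Summing over the $O(N^d \cdot N^{cd})$ admissible pairs yields a first-moment bound of order $N^{d+cd - (a+b)(d-2)}$.

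This overshoots the target $N^{-\nu} = N^{d + c(d-4) - (a+b)(d-2)}$ by a factor of exactly $N^{4c}$, reflecting the fact that each pinched loop is overcounted in the sum by its number of pinching pairs. Indeed, near any pinching point $x_0$, each of the two arcs of the loop spends expected time of order $N^{2c}$ in the ball $B(x_0, N^c)$ (by the classical occupation-time estimate $\sum_{y \in B(x_0, N^c)} g(x_0, y) \sim N^{2c}$ for a transient random walk), and hence visits of order $N^{2c}$ distinct lattice points there. Pairing any lattice visit from the first arc with any lattice visit from the second yields $\gtrsim N^{4c}$ pinching pairs per pinched loop, up to a harmless perturbation of the arc-diameter thresholds (which only shifts endpoints by $O(N^c) \ll N^a, N^b$). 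Dividing the first-moment bound by this multiplicity will give the desired $O(N^{-\nu})$ estimate on the expected number of pinched loops.

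The main technical obstacle will be to make the $N^{4c}$ overcounting rigorous: the lower bound on the number of pinching pairs per loop requires some care when the arc diameters are only barely above $N^a$ or $N^b$, because perturbing a pinching pair within $B(x_0, N^c)$ shifts each arc's endpoints by $O(N^c)$, which could in principle push an arc's diameter below its threshold. A clean way to handle this is to first prove a thickened version of the lemma where $N^a$ and $N^b$ are replaced by $2N^a$ and $2N^b$ --- which makes the multiplicity bound robust --- and then recover the original statement by a dyadic-decomposition / union-bound argument over a geometric sequence of thresholds. Alternatively, one can fix a canonical pinching pair per loop by some deterministic rule and directly bound the mass of loops whose canonical pair equals $(x, y)$, thereby bypassing the overcounting altogether.
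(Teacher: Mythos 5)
Your overall strategy (first-moment bound via the loop measure, plus the per-pair estimate $O(N^{-(a+b)(d-2)})$) is in the right spirit, and you correctly diagnose that summing over the $\sim N^{d+cd}$ admissible pairs overshoots the target by $N^{4c}$. But the step you propose to close this gap --- dividing the first moment by a multiplicity of order $N^{4c}$ --- is not a legitimate first-moment argument, and neither of your two suggested patches actually fixes it. The occupation-time bound $\sum_{y \in B(x_0,N^c)} g(x_0,y) \asymp N^{2c}$ is an \emph{expectation}; it does not imply that every (or $\mu$-almost every) pinched loop has $\gtrsim N^{4c}$ pinching pairs. A pinched loop can perfectly well enter the ball $B(x_0,N^c)$ only briefly on each arc and carry just $O(1)$ pinching pairs, and the inequality $\mu(M\ge 1)\le N^{-4c}\int M\,d\mu$ (where $M$ is the number of pinching pairs) is simply false for such loops --- one would need a separate, second-moment-type control on $\mu(1\le M < N^{4c})$. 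Your ``thickened thresholds plus dyadic decomposition'' proposal only addresses the borderline-diameter issue, not this occupation-time issue; and the ``canonical pair'' alternative is asserted without any indication of how the extra constraint of canonicity would buy the needed $N^{-4c}$ per pair.

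The paper sidesteps the multiplicity problem entirely by changing the objects over which the union bound runs: instead of summing over $\sim N^{d+cd}$ ordered pairs of lattice points, it covers $\Lambda_N$ by $O(N^{d(1-c)})$ balls of radius $\sim N^c$ and, for each ball, bounds directly the $\mu$-mass of loops making two excursions of diameters $\ge N^a$ and $\ge N^b$ away from that ball. Rooting the loop at the farthest point and estimating the return probabilities \emph{to the ball} (rather than to a single vertex) yields a per-ball mass of order $(N^c/N^a)^{d-2}(N^c/N^b)^{d-2}$, which is exactly $N^{2c(d-2)}$ larger than your per-pair mass but is summed over $N^{2cd}$ fewer objects --- a net gain of $N^{4c}$, with no multiplicity argument needed. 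If you want to salvage your route, you should replace the ``divide by $N^{4c}$'' step by this coarse-graining into mesoscopic balls; as written, the proof has a genuine gap at that step.
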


\begin{figure}[h]
  \centering
  \includegraphics[width=\textwidth]{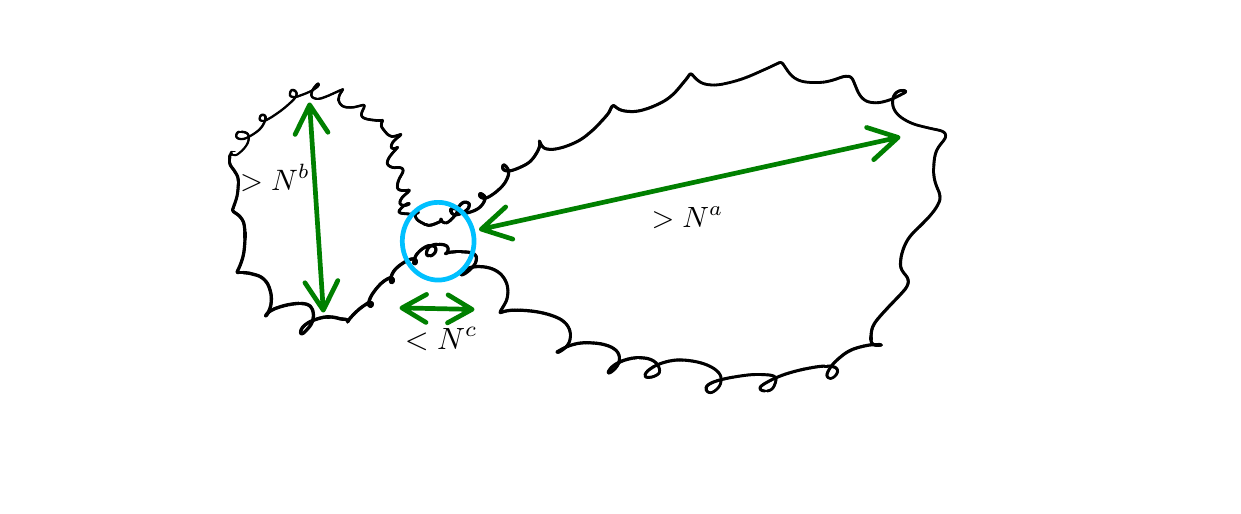}
  \caption{\label{fig:2}An $(a,b,c)$ pinching loop}
  \end{figure}

\begin {proof}
Since the loop soup is a Poisson point process, one just needs to obtain an upper bound on the mass (for the Brownian loop measure $\mu$) of the set of loops in $\Lambda_N$ with this property.
One way to proceed goes as follows: We can cover $\Lambda_N$ with $O(N^{d(1-c)})$ balls of radius $N^c$. Each pair $(x,y)$ as described in the lemma would necessarily be part of one the $O(N^{d(1-c)})$ balls obtained by doubling the radii of each of the previous ones. But for each such ball, the mass of loops that go twice through this ball, with two excursions away from it of diameter at least $N^{a}$ and $N^b$ respectively is bounded by a constant times
$$ (N^c/N^a)^{d-2} \times (N^c / N^b)^{d-2}. $$
One way to see this is to choose to root the loop at the point $z$ located the furthest $L^\infty$-distance from the center of the ball on the loop -- so that the loop is contained in the hypercube centered at the center of the ball, with $z$ on its boundary. So, if $R$ is the side-length of the hypercube, the loop has to first (starting from $z$) to reach the ball while staying in the hypercube (which gives a contribution bounded by a constant times $(1/R) \times (N^c / R)^{d-2}$), then travels to reach the boundary of the ball of radius $N^b$ (which does not contribute to the mass, as this anyway happens for random walk), then back to the ball of radius $N^c$ (which gives a contribution of a constant times $(N^c / N^b)^{d-2}$, and then finally exits the hypercube at $z$ (which contributes to a constant times $1/R^{d-1}$.
So, the mass for the loops corresponding to $z$ is bounded by $R^{-d} [ (N^{2c} / N^{a+b})^{d-2}]$. Summing over all $z$ at distance greater than $N^a$ from the center of the ball, and also on the $O(N^{d(1-c)})$ balls gives the upper bound of the lemma.
\end {proof}

\subsection {Different mesoscopic Brownian loops are in different clusters}

 We consider a critical loop-soup ${\mathcal L}$ in the cable-graph of $[-N,N]^d$ for $d \ge 7$ and look at its clusters.

 \begin {lemma}[No two mesoscopic Brownian loops in the same cluster, see the left part of Figure~\ref {fig:3}]
 \label {Lnotwo}
 Suppose that $a$ and $b$ are in $(0,1)$ with $\nu:= a +b  - ( 1 + 4/(d-2)) $ being positive. Then, the probability of the event ${\mathcal E}_2 (a, b, N)$  that there exists a loop-soup cluster that simultaneously contains a Brownian loop of diameter greater than $N^a$ and another Brownian loop of diameter greater than $N^{b}$ tends to $0$ as $N \to \infty$. It is in fact bounded by a constant times $N^{- \nu}$.
 \end {lemma}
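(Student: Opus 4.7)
I would bound $P[\mathcal{E}_2(a,b,N)]$ by a first-moment estimate on the number of ordered distinct pairs $(\ell_1,\ell_2)\in\mathcal{L}^2$ satisfying $\ell_i\subset\Lambda_N$, $\operatorname{diam}(\ell_1)\geq N^a$, $\operatorname{diam}(\ell_2)\geq N^b$, and $\ell_1,\ell_2$ in a common cluster of $\mathcal{L}$. The Slivnyak--Mecke formula for Poisson point processes identifies this expected number with
\[
\iint \mathbf{1}_{\operatorname{diam}(\ell_1)\geq N^a,\,\operatorname{diam}(\ell_2)\geq N^b,\,\ell_i\subset\Lambda_N}\; P\bigl[\ell_1\leftrightarrow\ell_2 \text{ in } \mathcal{L}'\cup\{\ell_1,\ell_2\}\bigr]\,d\mu(\ell_1)\,d\mu(\ell_2),
\]
where $\mathcal{L}'$ denotes an independent copy of the critical loop-soup. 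The two inputs I will use are (a) the warm-up mass estimate $\int\mathbf{1}_{v\in\ell,\,\operatorname{diam}(\ell)\geq N^c}\,d\mu(\ell)\lesssim N^{-c(d-2)}$ for any $v\in\Z^d$ and $c\in(0,1)$, and (b) the two-point estimate $P[v_1\leftrightarrow v_2]\lesssim (1+|v_1-v_2|)^{-(d-2)}$ recalled from the introduction, a direct consequence of the explicit formula of \cite{L1}.

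I then split the integrand according to whether (i) $\ell_1\cap\ell_2\neq\emptyset$, or (ii) $\ell_1\cap\ell_2=\emptyset$ and the two loops are connected via some cluster of $\mathcal{L}'$. In case (i), the intersection point forces a lattice vertex $v\in\Lambda_N\cap\Z^d$ at graph distance at most $1$ of both loops. Summing over the $O(N^d)$ such vertices and applying (a) once to $\ell_1$ and once to $\ell_2$ yields a contribution of order
\[
N^d\cdot N^{-a(d-2)}\cdot N^{-b(d-2)}\;=\;N^{\,d-(a+b)(d-2)},
\]
which is already much smaller than $N^{-\nu}$. In case (ii), one can find lattice vertices $v_1\in\ell_1$ and $v_2\in\ell_2$ lying in a common cluster of $\mathcal{L}'$; a union bound combined with (b) and (a) gives the contribution
\[
\sum_{v_1,v_2\in\Lambda_N\cap\Z^d}\frac{C}{(1+|v_1-v_2|)^{d-2}}\cdot N^{-a(d-2)}\cdot N^{-b(d-2)}\;\lesssim\;N^{d+2}\cdot N^{-(a+b)(d-2)},
\]
using the standard estimate $\sum_{v_2\in\Lambda_N}(1+|v_1-v_2|)^{-(d-2)}\lesssim N^2$. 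An algebraic check yields $d+2-(a+b)(d-2)=-(d-2)\nu$, so this contribution is bounded by a constant times $N^{-(d-2)\nu}\leq N^{-\nu}$ (using $d\geq 7$ and $\nu>0$), which combined with case (i) proves the claim.

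The one mildly delicate point is the discretization used at the start of each case: two cable-graph loops, or a loop and a cluster of $\mathcal{L}'$, can meet in the interior of an edge without sharing a lattice vertex. However, any such meeting point sits on an edge both of whose endpoints are visited by each of the relevant pieces, and since $\Z^d$ has bounded degree this replacement only costs a $d$-dependent constant in the constants appearing in (a) and (b). Once this bookkeeping is handled, the first-moment computation above is a clean application of the two estimates and no genuine new obstacle arises.
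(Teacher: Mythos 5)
Your proof is correct and follows essentially the same route as the paper: the paper bounds the same first moment by summing over pairs of integer points $x,y$ on the two loops and applying the BK inequality (or, as it explicitly notes, Mecke's formula), which is precisely your case~(ii) estimate $N^{d+2-(a+b)(d-2)}=N^{-(d-2)\nu}$. Your separate case~(i) for directly-intersecting loops is a minor extra split (the paper folds it into the same point-pair sum by allowing the connecting chain to be empty), and the algebra and the edge-interior discretization remark are both fine.
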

 Note that when $d \ge 7$, then such pairs $a$ and $b$ do exist -- for instance $a=  b = 9/10$ works for all $d \ge 7$.
 We can also note that the larger $d$ is, the larger the range for acceptable $(a, b)$ becomes. Finally (and this is mostly how we will use this result), we can note that when $b > 4 / (d-2)$, one can find $a <1$ is such a way that $\nu >0$.

 \begin {proof}
 The proof is very direct. We will simply upper bound this probability by the expected number of pairs of integer points $\{ x, y \}$ such that (i): $x$ is in a Brownian loop $B_x$ of diameter at least $N^a$, (ii): $y$ is in a (different) Brownian loop $B_y$ of diameter at least $N^b$, and such that (iii): some neighbour of $x$ is connected to some neighbour of $y$ by the clusters of the loop-soup ${\mathcal L}$ with $B_x$ and $B_y$ removed. Indeed, it is straightforward to see that if the event ${\mathcal E}_2 (a,b, N)$ holds, then one can find at least one such pair of points [just take $x$ and $y$ to be the last integer point on the first loop and the first integer point of the second loop on a path in the cluster that joins two integer points in these loops].

 The probability that a given integer point $x$ belongs to a Brownian loop (in the loop-soup) of diameter greater than $N^{a}$ is bounded by a constant times $1/N^{a (d-2)}$ (this is a Poisson random variable, so one just needs to estimate the mass of the loops in $\Lambda_N$ and of diameter greater than $N^a$ that go through $x$, which is upper bounded by the mass of the random walk loops in $\Z^d$ of diameter greater than $N^{a}$ that go through the origin). The individual probability of (ii) is evaluated in the same way, and we have the upper bound for $P[ x \leftrightarrow y ]$.

 By the BK inequality [here, we use the van den Berg-Kesten inequality for Poisson point processes -- that can be derived as a direct consequence from the BK inequality for independent coin tosses, see \cite {vdB} -- alternatively, we could also just use Mecke's equation for Poisson point processes since two of the three events involve only single loops], the probability that (i), (ii) and (iii) hold disjointly (using different Brownian loops in the loop-soup) is therefore upper-bounded by  a constant times
 $$\frac {1}{N^{(d-2)(a+b)}} \times \frac{1}{(|x-y|+ 1 )^{d-2}}.$$
 It now remains to sum over all pairs of points $x$ and $y$ in $[-N, N]^d$, which readily gives an upper bound of a constant times $N^{d+2}/N^{(d-2) (a+b)}$ for
 $P [ {\mathcal E}_2 (a, b, N)]$.
 \end {proof}

\begin{figure}[h]
  \centering
  \includegraphics[width=
  \textwidth]{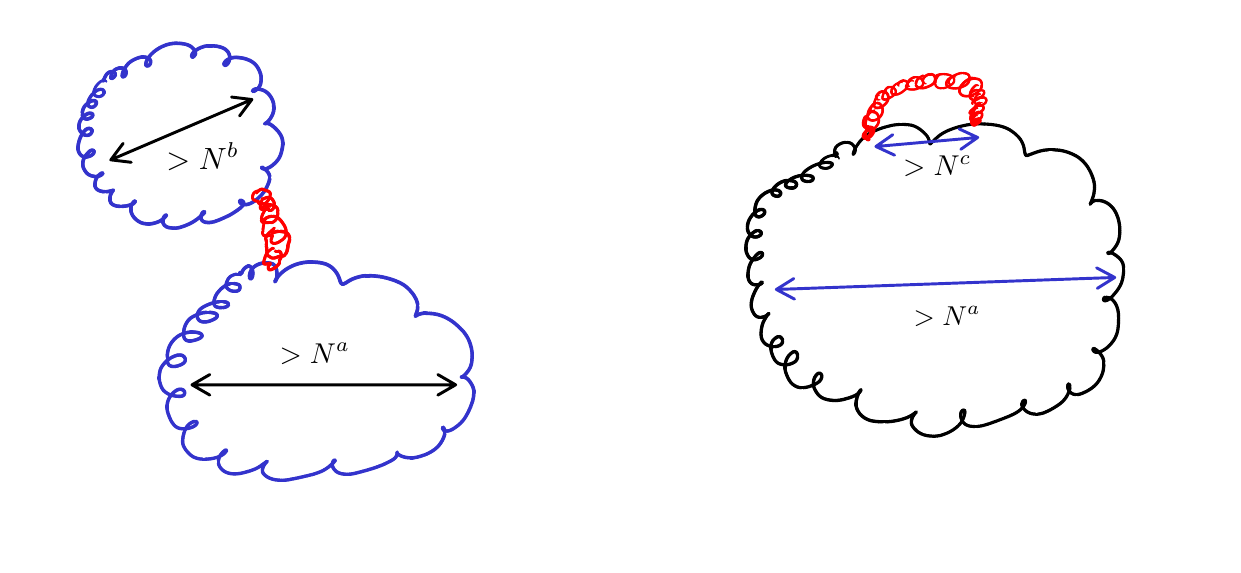}
  \caption{\label{fig:3}Low-probability configurations from Lemma \ref {Lnotwo} (two mesoscopic Brownian loops joined by a chain of loops) and Lemma \ref {L2} (a mesoscopic chain-of-loops-handle on top of a large Brownian loop)}
  \end{figure}

\subsection {No connection between distant points on mesoscopic Brownian loops}
We are going to establish a result about connections between different points of a single loop using the same simple idea:

\begin {lemma}[No connections between distant points on mesoscopic Brownian loops, see the right part of Figure \ref {fig:3}]
\label {L2}
\label {Lo2}
Let $a \ge c$ in $(0,1)$ with $ \nu := (d-2) a + (d-4) c - d >0$. Let ${\mathcal F}(a, c, N)$ be the event that there exist two integer points $x$ and $y$ that are (i) at distance at least $N^{c}$ from each other, (ii) both on the same Brownian loop $B$ of diameter at least $N^{a}$, and (iii) there exists neighbours $x'$ and $y'$ of $x$ and $y$ respectively that belong to the same cluster of of loops for the loop-soup obtained when one removes $B$. Then, the probability of this event is bounded by a constant times $N^{-\nu}$.
\end {lemma}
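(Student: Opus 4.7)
My plan is to mirror the proof of Lemma \ref{Lnotwo}. I bound $P[\mathcal{F}(a,c,N)]$ by the expected number of triples $(x, y, B)$ for which $B \in \mathcal{L}$ is a Brownian loop of diameter at least $N^a$, $x$ and $y$ are integer points of $B$ at distance at least $N^c$, and some pair of lattice neighbours $x' \sim x$, $y' \sim y$ are connected in $\mathcal{L} \setminus \{B\}$. On the event $\mathcal{F}(a,c,N)$ at least one such triple exists (just take $x$ and $y$ to be endpoints of a minimal chain connecting $B$ to itself through the rest of the loop-soup), so the probability is at most this expectation.

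Applying the Slivnyak--Mecke formula to the loop $B$ (using that $\mathcal{L}\setminus\{B\}$ has the law of an independent loop-soup, and that dropping a loop can only weaken connectivity), this expectation is bounded by
\[
  \sum_{\substack{x, y \in \Lambda_N \cap \Z^d \\ |x-y| \ge N^c}} \mu\bigl(\{B : x, y \in B,\ \operatorname{diam} B \ge N^a\}\bigr) \cdot P\bigl[\exists\, x' \sim x,\, y' \sim y : x' \leftrightarrow y'\bigr].
\]
The connectivity factor is at most $C/|x-y|^{d-2}$ by the explicit Green's-function formula recalled in the introduction (together with a union bound over the $O(1)$ neighbour pairs). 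For the loop-mass factor I would use exactly the rooting construction of the warm-up: in the regime $N^c \le |x-y| \le N^a$, root the loop at $x$, insist on an excursion to distance $N^a/4$, a visit of $y$, and a return, which yields a bound of order $C/(N^{a(d-2)}\,|x-y|^{d-2})$; in the regime $|x-y| > N^a$, the diameter condition is automatic and the analogous argument (dropping the excursion step) gives $C/|x-y|^{2(d-2)}$.

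Putting the pieces together, the contribution from $N^c \le |x-y| \le N^a$ is at most
\[
  \sum_{x \in \Lambda_N}\, \sum_{r \ge N^c} \frac{C\, r^{d-1}}{N^{a(d-2)}\, r^{2(d-2)}}\ \le\ C\, N^d\cdot N^{-a(d-2)}\cdot N^{-c(d-4)}\ =\ C\, N^{-\nu},
\]
since the $r$-sum is dominated by $r \sim N^c$ because $d-3 > 1$. The contribution from $|x-y| > N^a$ is of order $C\, N^{d - 2a(d-3)}$, which is no larger than $C\, N^{-\nu}$ precisely because the assumption $c \le a$ rearranges to $a(d-2) + c(d-4) \le 2a(d-3)$. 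I do not foresee any serious obstacle in this argument: it is a direct variant of Lemma \ref{Lnotwo} combined with the refined one-loop mass bound from the warm-up and the proof of Lemma \ref{L1}. The only mild delicacy is the bookkeeping between the two $|x-y|$ regimes, together with the observation that the bound is saturated by pairs at the lower threshold $|x-y| \sim N^c$, which is exactly what forces the shape $\nu = (d-2)a + (d-4)c - d$.
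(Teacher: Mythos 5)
Your proof is correct and takes essentially the same route as the paper: bound the probability by the expected number of triples $(x,y,B)$, apply Mecke/BK to factor the one-loop mass through $x$ and $y$ from the two-point connectivity estimate, and sum. The only difference is cosmetic: you split the sum at $|x-y| = N^a$ and check separately that the far regime is dominated (which is where the hypothesis $a\ge c$ enters for you), whereas the paper observes that the bound $C/(N^{a(d-2)}|x-y|^{d-2})$ on the loop mass is valid uniformly over $|x-y|\ge N^c$ — since $1/|x-y|^{2(d-2)}\le 1/(N^{a(d-2)}|x-y|^{d-2})$ once $|x-y|\ge N^a$ — so a single sum gives $N^{-\nu}$ directly without a case split.
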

Note that when $d \ge 7$, such $a$ and $c$ do exist. In particular, if $c > 2 / (d-4)$, it is possible to find $a <1$ so that $\nu >0$.
\begin {proof}
This goes along the very same line as the proof of the previous lemma. For each given $x$ and $y$ (at distance greater than $3$ from each other), the probability that they are on the same Brownian loop of diameter greater than $N^a$ is upper-bounded by a constant times $(1/|x-y|^{(d-2)}) \times (1/N^{a (d-2)})$, while the probability that some neighbours are part of a same cluster of loops is upper-bounded by a constant times $1/|x-y|^{d-2}$. The BK inequality [here, one can also use directly Mecke's equation for Poisson point processes] then shows that the probability that these two events occur disjointly (i.e., using different loops in the loop-soup) is upper-bounded by a constant times $(1/N^{a (d-2)}) \times (1/|x-y|^{2d-4})$. Summing over all pairs of points at distance at least $N^c$ from each other gives the explicit bound in the lemma.
\end {proof}

\subsection {Summarizing}

From now on, we consider some fixed $\beta$ with $\beta \in ( 4 / (d-2), 1)$ as in the theorem. We can then choose $\alpha<1$ sufficiently close to $1$ and $\gamma > 2 / (d-4)$ sufficiently close to $2 / (d-4)$ in such a way that $(a,b,c) = (\alpha, \beta, \gamma)$ satisfies the conditions of Lemmas \ref {L1}, \ref {Lnotwo} and \ref {Lo2} (note also that for any given $\gamma_0 > 2 / (d-4)$, we can choose $\gamma$ to be smaller than $\gamma_0$). These values of $(\alpha, \beta, \gamma)$ will be fixed from now on;  we therefore have that when $N$ is large, with a probability $1 - o(1)$ as $N \to \infty$, for a loop-soup in $[-N,N]^d$:
\begin {itemize}
\item No Brownian loop with diameter at least $N^\alpha$ has a $(\alpha, \beta, \gamma)$ pinching.
\item There is no connection (made by other loops than $B$) between any two points that are at distance greater from $N^{\gamma}$ from each other on any Brownian loop $B$ of diameter greater than $N^\alpha$.
\item No cluster contains simultaneously a loop of diameter greater than $N^\alpha$ and a loop of diameter greater than $N^{\beta}$.
\end {itemize}
Note of course that any loop of diameter greater than some constant $\eps$ times $N$ will have a diameter greater than $N^\alpha$ when $N$ is large.

\section {Proof of the theorem}

Making use of the switching property from \cite {W3}, we will now prove Theorem \ref {thm}. Our proof involves several steps that we now detail.

\subsection {A first use of the switching property}

In this section, we will make a first use of the switching property for loop-soups on the cable-graphs to see that clusters in ${\mathcal C}(\eps, N)$ will contain Brownian loops of diameter at least $\eps N /4$ with conditional probability at least $1/2$ (when one conditions on the cluster). This will allow to derive further results that we will use in the following sections, and serves also as a warm-up to some of the arguments that we will develop in Section \ref {S42}

\subsubsection {Focusing on finitely many tubes suffices}

For each given $\eps$, it is easy to find a finite set $\Delta_1, \ldots, \Delta_{M(\eps)}$ of $(d-2)$-dimensional affine subspaces in $\R^d$ such that any $(d-2)$-dimensional affine subspace $\Delta$ that intersects $[-1,1]^d$ will be very close to at least one of them, meaning that their respective intersections with $[-1,1]^d$ are $\eps/4$ close in Hausdorff distance. In particular, if some cycle in $[-N,N]^d$ has a non-zero index around $N \Delta$ while staying at distance at least $\eps N$ from it, then it will have a non-zero index around some $N \Delta_j$ while staying at distance at least $\eps N / 2$ from it.
So, if ${\mathcal D}_j (\eps, N)$ denotes the set of
clusters in $[-N,N]^d$ that contain a cycle that winds around $N\Delta_j$ and stays at distance at least $\eps N /2$ from it, then
 ${\mathcal C}(\eps, N) \subset \cup_{j \le M(\eps)}{\mathcal D}_j(\eps, N)$.

\subsubsection {The switching property}
Let us briefly recall the version of the switching property from \cite {W3} that we shall use here: Consider a bounded connected subgraph $G$ of the cable graph of $\Z^d$, and a $(d-2)$-dimensional affine subspace $\Delta_0$ of $\R^d$ such that $G \cap \Delta_0 = \emptyset$. We consider a Brownian loop-soup on $G$ and its clusters. A cluster $C$ will therefore either contain cycles with non-zero index around $\Delta_0$ or not. If it does, then the set of possible indices around $\Delta_0$ of oriented cycles in $C$ will be $i(C,\Delta_0) \Z$ for some positive integer $i(C,\Delta_0)$ (because the set of possible indices forms a subgroup of $\Z$).

In a loop-soup, Brownian motions are usually not considered to be oriented. We can however define the index of the unoriented Brownian loop around $\Delta_0$ to be the absolute value of the index of any oriented version of the Brownian loop.

\begin {proposition}[The loop version of the switching property -- Corollary 5 from \cite {W3}]
\label {switch}
For any loop-soup cluster $C$ that contains a cycle around $\Delta_0$, the conditional probability (given $C$) that the sum $\Sigma$ of the indices around $\Delta_0$ of the unoriented Brownian loops in $C$ is an even multiple of $i(C,\Delta_0)$ is exactly $1/2$.
\end {proposition}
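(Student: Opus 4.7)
The plan is to reduce the parity-of-index statement to the ``sign'' version of the switching property from \cite{W3}, by passing to a suitable double cover of the cluster $C$ that records the parity of $k/i(C,\Delta_0)$.

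First I would fix $C$ with $i := i(C,\Delta_0) \ge 1$ and construct a connected double cover $\pi \colon \hat C \to C$ as follows. The set of possible indices around $\Delta_0$ of oriented cycles in $C$ is by definition $i \Z$, so the character $\chi \colon i\Z \to \{\pm 1\}$ defined by $\chi(ik) = (-1)^k$ is non-trivial. Factoring the index map $\pi_1(C) \twoheadrightarrow i \Z \xrightarrow{\chi} \{\pm 1\}$ yields a non-trivial connected double cover $\hat C$ of the cable-graph space underlying $C$, equipped with a deck involution $\tau$.

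Second, I would examine how Brownian loops in $C$ lift to $\hat C$. A loop $\ell \subset C$ has an oriented index $k_\ell \in i\Z$ around $\Delta_0$, and lifts to: (a) two disjoint loops exchanged by $\tau$ if $k_\ell / i$ is even, or (b) a single $\tau$-invariant loop that doubly covers $\ell$ if $k_\ell / i$ is odd. Since $\Sigma = i \sum_\ell |k_\ell / i|$ and $|k_\ell / i|$ has the same parity as $k_\ell / i$, the event ``$\Sigma$ is an even multiple of $i$'' coincides with the event ``the number of type-(b) loops in $C$ is even''.

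Third, I would invoke the general sign version of the switching property from \cite{W3}: for a cable-graph loop-soup and any non-trivial connected double cover with involution $\tau$, conditionally on any cluster whose lift is connected (equivalently, that contains a $\tau$-invariant lifted loop), the parity of the number of $\tau$-invariant lifted loops in that cluster is uniform on $\{0, 1\}$. The mechanism is the now-standard one: the GFF on the cover decomposes into $\tau$-symmetric and $\tau$-antisymmetric parts, and the sign of the antisymmetric part on each cluster of the symmetric field is an unbiased $\pm 1$. Applied to $\pi \colon \hat C \to C$, this gives the claimed conditional probability of $1/2$.

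The main obstacle I anticipate is bookkeeping rather than substance: the cover $\hat C$ depends on the random cluster $C$ (through $i(C,\Delta_0)$), whereas the cleanest formulations of the switching property in \cite{W3} fix an ambient cover of the whole graph. To bridge this, one can either (i) first condition on the finite collection of macroscopic clusters together with their integers $i(C,\Delta_0)$, and in each deterministic case work in the $2i(C,\Delta_0)$-fold cyclic cover of $\R^d \setminus \Delta_0$, taking its canonical sub-double-cover that distinguishes odd from even multiples of $i(C,\Delta_0)$; or (ii) apply the switching property directly to the conditional loop-soup in a neighbourhood of $C$, where $\hat C$ extends to a fixed double cover. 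Either route reduces the argument to the fixed-cover form of the switching property, where the GFF-on-cover argument of \cite{W3} applies verbatim.
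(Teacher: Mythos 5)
Your argument takes a genuinely different route from the one the paper attributes to \cite{W3}. The paper recalls the mechanism as a direct combinatorial switching: fix a cycle in $C$ of index $i(C,\Delta_0)$ and flip the parity of the number of crossings of every edge along it, a measure-preserving involution that toggles the parity of $\Sigma/i(C,\Delta_0)$. You instead pass to a topological double cover of $C$ and rely on a GFF-on-cover decomposition into $\tau$-symmetric and $\tau$-antisymmetric parts. This twisted-GFF viewpoint is closer in spirit to \cite{L3} (where the intensity-doubling statement was conjectured) than to the switching identity of \cite{W3}, and it is a valid \emph{idea}, but as written there are two problems.

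First, a small one: the parenthetical ``equivalently, that contains a $\tau$-invariant lifted loop'' is not an equivalence. Containing a $\tau$-invariant lifted loop forces the lift of the cluster to be connected, but the converse fails; a cluster can have connected lift while every single Brownian loop in it lifts to two disjoint copies, the non-trivial cycle being built from arcs of several loops. The correct hypothesis is ``lift connected,'' which is automatic here since the set of achievable indices in $C$ is exactly $i(C,\Delta_0)\Z$, so this does not break the argument, but the stated equivalence is false.

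Second, and more seriously, the issue you defer as ``bookkeeping'' is a substance issue precisely because $i(C,\Delta_0)$ may exceed $1$. The double cover $\hat C\to C$ is built from the character $ik\mapsto(-1)^k$ on $i\Z$, and when $i$ is even this character does not extend to a $\{\pm1\}$-valued character of $\Z$, so $\hat C$ does \emph{not} arise by restriction from a double cover of the ambient graph $G\subset\R^d\setminus\Delta_0$. Your proposed fix (i) does not repair this: the unique double cover of $\R^d\setminus\Delta_0$ sitting under the $2i$-fold cyclic cover is the ordinary $2$-fold cover (corresponding to the index-two subgroup $2\Z/2i\Z$ of $\Z/2i\Z$), whose character restricted to $i\Z$ is trivial when $i$ is even, so it distinguishes nothing. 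One would need a switching/sign statement formulated for the full $\Z/2i$ cyclic cover, or else a version of the switching property that can be applied to the cover of $C$ alone after conditioning on $C$ (your fix (ii)), and neither of these is a formulation you can invoke off the shelf without further work. The edge-crossing switching along a fixed index-$i$ cycle recalled in the paper sidesteps this cleanly, since it is a purely local operation on $C$ that never requires the cover to extend.
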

Let us just recall that the proof is based on the idea that it is possible to switch in a measure-preserving way the parity of the number of crossings of all edges along any given cycle in $C$ with index $i(C,\Delta_0)$ around $\Delta_0$; this switching then changes the parity of $\Sigma / i(C,\Delta_0)$.

One important trivial observation is that when $\Sigma$ is an odd multiple of $i(C, \Delta_0)$, then there exists at least one  Brownian loop in $C$ with non-zero index around $\Delta_0$.

\subsubsection {Using the switching property in a restricted graph}

We will now  use this in the case where $G \subset [-N,N]^d$, and the cluster $C$ is at distance at least $\eps N/4$ from $\Delta_0$, so that only Brownian loops of diameter greater than $\eps N/2$ can actually contribute to the total index $\Sigma$. We will use the switching property in somewhat similar settings again later in the paper.

\begin {lemma}
For  any $N$ and any cluster $C$ in ${\mathcal C} (\eps, N)$: The conditional probability (given $C$) that this cluster contains a Brownian loop of diameter at least $\eps N/4$ is at least $1/2$.
\end {lemma}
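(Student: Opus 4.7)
The plan is to apply the loop-version of the switching property (Proposition \ref{switch}) to a sub-loop-soup that avoids a neighbourhood of the relevant tube, and then to transfer the resulting bound using the Poissonian restriction property.

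I start by reducing to one of the finitely many tubes. Since $\mathcal{C}(\eps,N)\subset \bigcup_{j\le M(\eps)}\mathcal{D}_j(\eps,N)$, I can fix (measurably in $C$) an index $j$ with $C\in \mathcal{D}_j(\eps,N)$ and set $\Delta_0:=N\Delta_j$, so that $C$ contains a simple cycle $\Gamma$ winding around $\Delta_0$ while remaining at distance $\ge \eps N/2$ from it. Writing $T_\delta$ for the open $\delta$-neighbourhood of $\Delta_0$, let $G:=[-N,N]^d\setminus T_{\eps N/4}$, so that $\Gamma\subset G$ and $G\cap \Delta_0=\emptyset$. Split the loop-soup as $\mathcal{L}=\mathcal{L}^G\sqcup \mathcal{L}^c$, where $\mathcal{L}^G$ consists of the loops of $\mathcal{L}$ entirely contained in $G$ and $\mathcal{L}^c$ of the loops visiting $T_{\eps N/4}$; by Poisson restriction $\mathcal{L}^G$ and $\mathcal{L}^c$ are independent, and $\mathcal{L}^G$ is itself a critical loop-soup on $G$.

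I then distinguish two random sub-cases according to the loop decomposition of $C$. Let $A$ be the event that $C$ contains a Brownian loop of diameter at least $\eps N/4$. In sub-case $(B)$, some loop of $\mathcal{L}^c$ belonging to $C$ already has diameter $\ge \eps N/4$, so $A$ holds. In sub-case $(A)$, every loop of $\mathcal{L}^c\cap C$ has diameter $<\eps N/4$; since each such loop has a point within distance $\eps N/4$ of $\Delta_0$, it lies entirely in $T_{\eps N/2}$ and is therefore disjoint from $\Gamma$ (which is at distance $\ge \eps N/2$ from $\Delta_0$). Consequently the loops tracing $\Gamma$ all belong to $\mathcal{L}^G$, so $\Gamma$ lies inside a single cluster $C^G$ of $\mathcal{L}^G$. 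I apply Proposition \ref{switch} on $G$ to this cluster and to $\Delta_0$: conditionally on $C^G$, the sum of the unoriented indices around $\Delta_0$ of the loops of $C^G$ is an odd multiple of $i(C^G,\Delta_0)\ge 1$ with probability exactly $1/2$, so with conditional probability at least $1/2$ some loop of $C^G$ has non-zero index around $\Delta_0$. Such a loop stays in $G$ while encircling $\Delta_0$, forcing its diameter to be at least $\eps N/2$; since it lies in $C^G\subset C$, event $A$ holds. Thus in sub-case $(A)$, $\Pb(A\mid C^G)\ge 1/2$.

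It remains to transfer this bound from conditioning on $C^G$ to conditioning on $C$: the switching event depends only on the loops of $\mathcal{L}^G$ forming $C^G$, whereas $C$ and the indicator of sub-case $(A)$ are measurable with respect to the clustering of $\mathcal{L}^G$ together with $\mathcal{L}^c$ (given the full clustering of $\mathcal{L}^G$, $C$ is determined by which loops of $\mathcal{L}^c$ attach to $C^G$). By independence of $\mathcal{L}^G$ and $\mathcal{L}^c$, the switching probability is unaffected by further conditioning on $\mathcal{L}^c$, and averaging over the conditional law of $C^G$ given $C$ preserves the $1/2$ bound in sub-case $(A)$; combined with the trivial bound $1$ in sub-case $(B)$, this yields $\Pb(A\mid C)\ge 1/2$. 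The delicate step is exactly this transfer, since $C$ entangles both the $G$-internal and the $G$-exiting loops, and only the Poissonian independence of $\mathcal{L}^G$ and $\mathcal{L}^c$ guarantees that the switching bound on $C^G$ really survives the passage to conditioning on $C$.
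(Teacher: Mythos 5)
Your proof is correct and follows essentially the same route as the paper: reduce to a fixed tube $N\Delta_j$, split according to whether some loop of the cluster touching the tube neighbourhood is already large, and in the complementary case observe that $\Gamma$ sits inside a cluster of the loop-soup restricted to the subgraph $G$, to which Proposition~\ref{switch} applies. The only cosmetic difference is in how you phrase the dichotomy (you split on the diameter of the $\mathcal{L}^c$-loops in $C$, while the paper splits on whether some loop's projection crosses both circles of radii $\eps N/4$ and $\eps N/2$ — yours is a slightly larger first case, but both choices make the complementary case work). You also spell out the passage from conditioning on $C^G$ to conditioning on $C$, which the paper treats as implicit; that step is indeed justified by the Poisson independence of $\mathcal{L}^G$ and $\mathcal{L}^c$, together with the fact that the decompositions of distinct $\mathcal{L}^G$-clusters are conditionally independent given the clustering, and your explanation captures the right idea.
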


\begin {proof}
It suffices to show that for all $j$, this property holds for all $C \in {\mathcal D}_j (\eps, N)$.
For every $C \in {\mathcal D}_j (\eps, N)$, we can consider separately the following two cases:
\begin {itemize}
\item There is a Brownian loop (in the loop-soup) that is part of $C$ with the property that its projection on the plane orthogonal to $N\Delta_j$ intersects both the circles of radii $\eps N / 4$ and $\eps N/ 2$ around the projection of $N\Delta_j$. Then, this Brownian loop has necessarily a diameter at least $\eps N /4$.
\item If no such Brownian loop exists, then it means that any cycle $\Gamma$ in $C$ that ensures that $C \in {\mathcal D}_j(\eps, N)$ will still be part of a cluster $C'$ of the loop-soup that is obtained by removing all the loops that get to distance smaller than $\eps N/ 4$ from $N\Delta_j$. This is a Brownian loop-soup in the subgraph $G$ of $\Lambda_N$ consisting of all  points that are at distance greater than $\eps N/4$ from $N \Delta_j$. We can therefore apply the switching property (Proposition \ref {switch}) to that loop-soup and that cluster $C'$ to deduce that with conditional probability at least $1/2$, this cluster will contain a Brownian loop with non-zero index around $N\Delta_j$. Such a Brownian loop then necessarily has a diameter at least $\eps N/2$.
\end {itemize}
\end {proof}

\subsubsection {Some direct consequences}

This already immediately implies that the number of clusters in  ${\mathcal C} (\eps, N)$ remains tight (which is part of Theorem \ref {thm}):
\begin {corollary}
\label {co3}
The expected number of clusters in ${\mathcal C} (\eps, N)$ is bounded by twice the expected number of Brownian loops of diameter at least $\eps N /4$.  In particular, for any fixed $\eps >0$, the number of clusters in ${\mathcal C} (\eps , N)$ is tight.
\end{corollary}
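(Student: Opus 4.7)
The plan is a short double-counting argument built directly on top of the preceding lemma. Let $K_N := |\mathcal{C}(\eps,N)|$ and let $L_N$ denote the total number of Brownian loops in the loop-soup in $\Lambda_N$ of diameter at least $\eps N/4$. I would count pairs $(C,\ell)$ with $C\in\mathcal{C}(\eps,N)$ and $\ell$ a Brownian loop in $C$ of diameter $\geq \eps N/4$ in two ways. On the one hand, each such loop lies in at most one cluster, so the number of such pairs is at most $L_N$. On the other hand, the preceding lemma asserts that for each $C\in\mathcal{C}(\eps,N)$, conditionally on the cluster, the probability that $C$ contributes at least one such pair is at least $1/2$.

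To turn the per-cluster statement into a bound on $E[K_N]$, I would invoke the conditional-independence observation from Section~2: conditionally on the full collection of macroscopic clusters (viewed as sets), the loop-decompositions of distinct clusters are independent, so the lemma really gives that conditionally on $\mathcal{C}(\eps,N)$, each $C\in\mathcal{C}(\eps,N)$ has conditional probability at least $1/2$ of containing a Brownian loop of diameter $\geq \eps N/4$. Taking expectations then produces
\[
\tfrac12\, E[K_N] \ \leq\ E\!\left[\#\{C\in\mathcal{C}(\eps,N):\, C \text{ contains a loop of diameter} \geq \eps N/4\}\right] \ \leq\ E[L_N],
\]
which is exactly the first assertion of the corollary.

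For the tightness claim, I would note that $L_N$ is a Poisson random variable whose parameter is the $\mu$-mass of loops contained in $\Lambda_N$ of diameter at least $\eps N/4$, where $\mu$ is the Brownian loop measure. By Brownian scaling, this mass converges as $N\to\infty$ to the mass of loops of diameter at least $\eps/4$ in $[-1,1]^d$ under the continuum Brownian loop measure, and is in particular uniformly bounded in $N$ by some constant $c(\eps)$. Hence $E[K_N]\leq 2c(\eps)$ and Markov's inequality yields $\Pb(K_N\geq M)\leq 2c(\eps)/M$ uniformly in $N$, which is precisely the claimed tightness.

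There is no real obstacle here: the only mild point to be careful about is that the conditional statement of the lemma must be integrated against the random collection of clusters, and this is cleanly justified by the Poissonian conditional-independence property for loop-soup cluster decompositions recalled in Section~2.
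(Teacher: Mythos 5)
Your argument is correct and matches the route the paper takes: it combines the preceding lemma with the conditional independence of cluster decompositions and the fact that each loop lies in a unique cluster, and then observes that the number $L_N$ of large loops is Poisson with uniformly bounded mean. The only difference is cosmetic — the paper records the stronger fact that a sum of $K_\eps(N)$ independent Bernoulli$(1/2)$ variables is stochastically dominated by $L_N$ (giving faster-than-exponential decay of $P[K_\eps(N)\ge k]$), whereas your double-counting extracts only the expectation inequality $\tfrac12 E[K_N]\le E[L_N]$, which is exactly what the corollary as stated requires.
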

In fact, a more quantitative estimate holds: The lemma and the independence of the decomposition of each cluster imply that if $K_\eps(N)$ denotes the cardinality of ${\mathcal C}( \eps, N)$, then the sum of $K_\eps(N)$ independent Bernoulli random variables with parameter $1/2$ is dominated (in law) by the number of Brownian loops of diameter greater than $\eps N /4$, which is a Poisson random variable with a mean that remains bounded  as $N \to \infty$. It therefore follows for instance immediately that the probability that $\sup_N P[K_\eps (N) \ge k]$ decays faster than exponentially in $k$ (as we will see later, $K_\eps$ in fact converges in law to a Poisson random variable).
\medbreak

Another simple direct consequence is the following:
\begin {corollary}
 \label {co4}
The expected number of integer points in the union of all clusters in ${\mathcal C}(\eps, N)$ is bounded by a constant (depending on $\eps$) times $N^4$.
\end {corollary}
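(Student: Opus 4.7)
My plan is to combine the lemma just proved with a first-moment computation that uses Mecke's identity for the Poisson loop-soup and the Green's-function bound on two-point connectivity. The key observation is that any integer point in a cluster of $\mathcal{C}(\eps,N)$ can, with conditional probability at least $1/2$, be viewed as an integer point lying in (or connected by a chain of loops to) a large Brownian loop; the expected number of such points can be estimated directly.

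Concretely, denote by $\mathcal{M}_N$ the (random) union of all loop-soup clusters in $\Lambda_N$ that contain at least one Brownian loop of diameter $\ge \eps N/4$. Any such loop forces its cluster to belong to $\mathcal{C}(\eps,N)$, and by the preceding lemma the conditional probability of this happening given each $C \in \mathcal{C}(\eps,N)$ is at least $1/2$. Since $|C|$ (counting integer points) is $C$-measurable, summing the pointwise inequality this gives
\[
E\!\left[\bigl|\bigl(\textstyle\bigcup_{C \in \mathcal{C}(\eps,N)} C\bigr)\cap \Z^d\bigr|\right] \;\le\; 2 \sum_{x \in \Lambda_N \cap \Z^d} P(x \in \mathcal{M}_N).
\]
Hence it is enough to prove $P(x \in \mathcal{M}_N) \lesssim N^{-(d-4)}$ uniformly in $x \in \Lambda_N \cap \Z^d$: summation over the $O(N^d)$ choices of $x$ then yields the desired $O(N^4)$.

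For a fixed $x$, either $x$ itself lies on a Brownian loop of diameter $\ge \eps N/4$, which by the warm-up of Section 3.1 occurs with probability $\lesssim N^{-(d-2)}$ (much smaller than required), or there exist a large loop $B$ in the soup and an integer point $y \in B$ such that $x \leftrightarrow y$ in the loop-soup with $B$ removed. The latter is a disjoint-occurrence event (the chain of loops joining $x$ to $y$ does not involve $B$), so, exactly as in the proofs of Lemmas \ref{Lnotwo} and \ref{Lo2}, Mecke's identity applied to $B$ (decoupling $\{B\}$ from its independent complement in the Poisson soup), or equivalently the BK inequality for Poisson point processes, gives
\[
P(x \in \mathcal{M}_N,\, x \notin \text{large loop}) \;\lesssim\; \sum_{y \in \Lambda_N \cap \Z^d} P(y \text{ lies on a loop of diameter }\ge \eps N/4)\,\cdot\, P(x \leftrightarrow y) \;\lesssim\; \frac{1}{N^{d-2}} \sum_{y \in \Lambda_N \cap \Z^d} \frac{1}{(1+|x-y|)^{d-2}},
\]
where I used the warm-up of Section 3.1 for the first factor and the explicit Green's-function formula for two-point connectivity in a cable-graph loop-soup for the second.

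A routine one-line estimate gives $\sum_{y \in \Lambda_N} (1+|x-y|)^{-(d-2)} \lesssim N^2$ (integer points at distance $r$ contribute $r^{d-1}\cdot r^{-(d-2)} = r$, and $\sum_{r \lesssim N} r \asymp N^2$), so the per-point bound is indeed of order $N^{-(d-2)} \cdot N^2 = N^{-(d-4)}$, and summation over $\Lambda_N \cap \Z^d$ yields $O(N^4)$ as claimed. The one point requiring care is the disjoint-occurrence reduction, but it is straightforward in this setting because the witness large loop $B$ can always be chosen distinct from the loops forming the connecting chain whenever $x \notin B$; the argument is identical in form to the ones already carried out for Lemmas \ref{Lnotwo} and \ref{Lo2}.
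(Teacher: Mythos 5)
Your proposal is correct and takes essentially the same route as the paper: reduce via the preceding lemma (factor $2$) to counting integer points in clusters that contain a Brownian loop of diameter $\ge \eps N/4$, then bound this by a double sum using BK/Mecke together with the two basic inputs — the probability $\lesssim N^{-(d-2)}$ that a point lies on a macroscopic loop (equivalently $\lesssim N^2$ integer points total on such loops) and the two-point connectivity bound $P(x\leftrightarrow y)\lesssim (1+|x-y|)^{-(d-2)}$ (equivalently $\lesssim N^2$ points per cluster). You merely organize the double sum pointwise in $x$ while the paper sums first over points neighbouring a large loop, but the underlying first-moment/BK argument and the $N^2\times N^2$ bookkeeping are identical.
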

\begin {proof}
The lemma shows that this expected number of points is bounded by twice the number of integer points that belong to a cluster that contains a Brownian loop of diameter at least $\eps N / 4$. But the expected number of integer points $x$ that are at distance smaller than $1$ from such a Brownian loop is bounded by a constant times $N^2$. On the other hand, for each given point $x$, the expected number of integer points that are in the same cluster as $x$ is bounded by $N^2$ (by the two-point estimate). We can then use the BK inequality to deduce that the expected number of points in the lemma is bounded by a constant times $N^2 \times N^2$ (indeed a point $y$ would have to be connected to an integer point $x$ neighboring a large Brownian loop via a chain of loops that does not use that large Brownian loop, so that one can obtain the estimate by summing over all $x$).
\end {proof}

\subsection {Statement of the key proposition and consequences}

We start with a Brownian loop-soup in $\Lambda_N$.
We have seen that for each cluster $C$ in ${\mathcal C} (\eps, N)$, the conditional probability (given $C$) that it contains a Brownian loop of diameter at least $\eps N /4 $ is at least $1/2$. Since the expected number of Brownian loops of diameter at least $\eps N /4$ is finite, this makes it possible, for almost each $C$, to define the conditional law of such a Brownian loop of diameter greater than $\eps N / 4$ in $C$.

Let us now sample independently two Brownian loops $B^1(C)$ and $B^2(C)$ according to this conditional law (given $C$) of such large Brownian loops contained in $C$.
When $C$ does contain one or more Brownian loops of diameter at least $\eps N/4$, we can in fact simply choose $B^1(C)$ to be equal to one of them  -- chosen at uniformly at random if there are more than one (or we could not worry about this event because of Lemma \ref {Lnotwo} that ensures that it has small probability). In this way, the conditional probability that $B^1 (C)$ is part of the original loop-soup is therefore at least $1/2$.

We can do this for all clusters in ${\mathcal C} (\eps, N)$ simultaneously (and independently). We can also order the $K_\eps (N)$ clusters of ${\mathcal C}(\eps, N)$ using some deterministic procedure -- for instance by decreasing diameter -- and call them $C_1^N, \ldots, C_{K_\eps(N)}^N$. In the sequel, we will simply write $K_\eps$ instead of $K_\eps (N)$ and $C_i$ instead of $C_i^N$ to simplify notation. It will be implicit that this collection $C_1, \ldots, C_{K(\eps)}$ and its cardinality depend on $N$ (and on the sampling of the loop-soup in $[-N,N]^d$).

The following result will be the key to the theorem (here $d_H$ denotes the Hausdorff distance -- recall also that the constraint on $\beta$ was that $\beta > 4 / (d-2) $).
\begin {proposition}
\label {P7}
One has
$$ \lim_{N \to \infty}  P [ \exists k\le K_\eps , \ d_H (B^1 (C_k), B^2 (C_k)) >  N^{\beta}] = 0. $$
\end {proposition}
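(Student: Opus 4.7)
The plan is to isolate the geometric content in a single deterministic claim holding on a high-probability event, and to conclude by combining it with the exchangeability of $B^1(C)$ and $B^2(C)$ given $C$ together with the $\ge 1/2$ lower bound of Section~4.1.3. The key auxiliary object is a canonical cycle $\Gamma(C)$ depending on $C$ alone: for concreteness, take $\Gamma(C)$ to be the shortest simple cycle in $C$ (with ties broken by a fixed enumeration rule) whose projection on the plane orthogonal to one of the reference subspaces $N\Delta_j$ of Section~4.1.1 has non-zero index around a point at distance $\ge \eps N/2$. Since $C\in\mathcal C(\eps, N)$, such a cycle exists, and $\Gamma(C)$ is a deterministic function of $C$.

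Let $G_N$ denote the intersection of the three high-probability events from Section~3.5, so that $P[G_N^c]\to 0$ by Lemmas~\ref{L1}, \ref{Lnotwo}, \ref{Lo2}. The \emph{main deterministic claim}, on which the whole proof rests, is: on $G_N$, for every $C\in \mathcal C(\eps, N)$ which contains a Brownian loop $L$ of diameter $\ge \eps N/4$, one has $d_H(L, \Gamma(C))\le N^\beta/2$. The heuristic is that on $G_N$, Lemma~\ref{Lnotwo} at $(\alpha, \beta)$ forces $L$ to be the unique Brownian loop of $C$ of diameter $>N^\beta$, so every other loop in $C$ has diameter $\le N^\beta$; Lemma~\ref{Lo2} at $(\alpha, \gamma)$ forces each side-branch of $C$ to attach to $L$ at a set of points of internal diameter $\le N^\gamma$; and Lemma~\ref{L1} forbids any $(\alpha, \beta, \gamma)$-pinching of $L$ itself. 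A topological argument, carried out most cleanly after lifting to the universal cover of $\R^d\setminus N\Delta_j$ as foreshadowed in the introduction, then shows that any simple cycle in $C$ realising non-trivial winding around $N\Delta_j$ at distance $\ge \eps N/2$ is forced to shadow $L$ up to $O(N^\beta)$ deviations; in particular the shortest such cycle $\Gamma(C)$ satisfies the desired bound.

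Granting this, the conclusion is a short Markov-style calculation. Write $q(C) := P[d_H(B^1(C), \Gamma(C))>N^\beta/2\mid C]$. The triangle inequality for $d_H$ and the conditional equidistribution of $B^1(C)$ and $B^2(C)$ given $C$ yield
\[
P\!\left[\exists k\le K_\eps,\ d_H(B^1(C_k), B^2(C_k)) > N^\beta\right] \le 2\, E\!\left[\sum_{k\le K_\eps} q(C_k)\right].
\]
Since $P[\exists\,\text{large loop in }C\mid C]\ge 1/2$ on $\mathcal C(\eps, N)$, one has $q(C)\le 2\, q(C)\cdot P[\exists\,\text{large loop}\mid C]$; unwinding the definition of $q$, the product on the right equals the conditional probability that $C$ contains a large loop lying at Hausdorff distance $>N^\beta/2$ from $\Gamma(C)$. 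Hence
\[
E\!\left[\sum_{k\le K_\eps} q(C_k)\right] \le 2\, E\!\left[\#\{k\le K_\eps : C_k \text{ contains a large loop at } d_H\text{-distance} > N^\beta/2 \text{ from } \Gamma(C_k)\}\right].
\]
By the deterministic claim and Lemma~\ref{Lnotwo}, this count vanishes on $G_N$ and is trivially bounded by $K_\eps$ off $G_N$. Since Corollary~\ref{co3} and the exponential-tail remark in its aftermath give $\sup_N E[K_\eps^2]<\infty$, Cauchy--Schwarz yields $E[K_\eps\,\mathbf 1_{G_N^c}]\to 0$, and the proposition follows.

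The main obstacle is the deterministic geometric claim. While Lemmas \ref{L1}--\ref{Lo2} give strong local control of $C$ near the unique macroscopic loop $L$, one still has to rule out the possibility that a short cycle of $C$ realises the required non-trivial winding around $N\Delta_j$ by traversing a long ``tentacle'' of chained small loops rather than by shadowing $L$ itself; this topological/homotopical step, most cleanly handled via the universal cover of $\R^d\setminus N\Delta_j$, is the technical heart of the proof.
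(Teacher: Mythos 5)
Your reduction (the Markov-style computation using the coupling of $B^1(C)$ with the actual large loop when one exists, the conditional exchangeability of $B^1$ and $B^2$, and the triangle inequality for $d_H$) is fine, and your observation that $q(C)\,P[\exists\,\text{large loop}\mid C]$ equals the conditional probability that $C$ contains a large loop far from $\Gamma(C)$ is correct. However, the entire argument rests on the ``deterministic geometric claim,'' and that claim is genuinely false as stated: it is not a deterministic consequence of the events of Lemmas~\ref{L1}--\ref{Lo2}, and cannot be made one.

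There are two concrete reasons. First, nothing in $G_N$ forbids the scenario where $C$ contains the macroscopic loop $L$ together with a long ``tentacle'' of small loops, attached to $L$ over a small patch, that itself closes up into a large cycle winding around $N\Delta_j$ while staying far from most of $L$. Lemma~\ref{Lnotwo} only constrains pairs of mesoscopic loops, Lemma~\ref{Lo2} only constrains connections between \emph{distant} points of $L$, and Lemma~\ref{L1} only constrains pinchings of loops in the loop-soup. To rule out this tentacle scenario the paper uses the switching property: in Lemma~\ref{L55}, one resamples the macroscopic loops, deletes $B_k^1$ with positive conditional probability, and applies the switching property to the surviving cluster to produce, with conditional probability $\ge 1/2$, a second macroscopic Brownian loop in $C_k$, contradicting Lemma~\ref{Lnotwo}. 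This is inherently a conditional-probability argument, not a deterministic statement valid on a fixed high-probability event; you never invoke the switching property at all, even though it is the central tool the paper is built around. Likewise, to exclude the remaining scenario where the excursion of the second cycle away from $L$ has $N^\gamma$-close endpoints on $L$, the paper needs a quantitative probability bound of order $N^{2+2\gamma-(d-2)}$ obtained by comparison with a Green's function sum in the universal cover of $\Z^d\setminus\Delta$; this again is a probabilistic estimate outside $G_N$, not a ``topological/homotopical step.''

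Second, your choice of $\Gamma(C)$ as the shortest simple cycle with the winding property is not compatible with the tools available. Lemma~\ref{L1} (no $(\alpha,\beta,\gamma)$-pinching) is a statement about Brownian loops in the loop-soup, and there is no reason for an arbitrary shortest cycle in a cluster to inherit this property. The paper's choice of working throughout with the virtual loop $B^2(C_k)$, equidistributed with $B^1(C_k)$ given $C_k$, is precisely what allows it to transfer a priori Brownian-loop estimates (pinching, distant-point connections) to the second sample; a deterministic $\Gamma(C)$ does not enjoy this. In the paper, the canonical cycle $\Gamma(C_k)$ is only constructed \emph{a posteriori}, as an atom of the conditional law of $B^2(C_k)$ once the proposition has been proved, and this ordering is not incidental. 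In short, your scaffolding around the main geometric step is sound, but the step itself is missing, it is not of the nature (deterministic, topological) you ascribe to it, and the idea you need — the switching property used via resampling, plus the universal-cover Green's function bound — does not appear in the proposal.
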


Note that this is an ``annealed'' result. The probability in question is averaged over all realizations of the collection $(C_1, \ldots, C_{K_\eps})$. Indeed, in the unlikely case where a cluster was close to be a figure eight type graph, one could guess (and indeed prove using some variation of the switching property) that this Hausdorff distance might be of order $N$ with a sizeable probability (as with a probability bounded from below, the Brownian loop $B^1$ could circumnavigate along one of the two cycles in the figure eight while $B^2$ circumnavigates around the other one).

Let us immediately explain how this proposition leads us much closer to Theorem \ref {thm}:
\begin {itemize}
\item The proposition shows that by sampling $B^2(C_k)$, we have a cycle that is in fact (with high probability) very close to the actual Brownian loop $B^1(C_k)$ contained in $C_k$ (in case $C_k$ does indeed contain such a loop). Since the $B^1 (C_k)$ and $B^2 (C_k)$ are independent (conditionally on $C_k$), one can furthermore note that there exists at least one loop $\Gamma = \Gamma (C_k)$ (and we can in fact choose it as a deterministic function of $C_k$ among all options) with the property that
$$P [ d_H ( B^1 (C_k), \Gamma ) > N^\beta | C_k  ] \le P [d_H(  B^1 (C_k) , B^2 (C_k) ) > N^\beta  | C_k ] .$$
Hence, for all $k_0$,
$$P [K_\eps \le k_0 \hbox { and for all } k \le K_\eps, \  d_H (B^1 (C_k, \Gamma (C_k)) > N^\beta  ] \to 0 $$
as $N \to \infty$.

\item It shows also readily the following very useful fact: For any given $k$, on the event $k \le K_\eps$, the unoriented Brownian loops $B^1 (C_k)$ and $B^2 (C_k)$ will have (with very high probability) the same index around $\Delta$ [indeed, recalling that Brownian loops in the continuum in these dimensions are simple loops, and that these cable graph loops tend to such Brownian loops \cite {LT}]. This shows that (with very high probability) there exists a deterministic function  $I_{C_k}$ of $C_k$ such that the conditional probability that the index of $B^1 (C_k)$ is $I_{C_k}$ is very high [indeed, if two i.i.d. random variables have a probability close to $1$ to be equal, then their law has an atom with mass close to $1$]. In particular, the (annealed, i.e., averaged over all $C_k$'s) probability that this index is an even multiple of $I_{C_k}$ goes to $0$. This therefore implies that the probability that there exists a Brownian loop in the cluster and that the index of this cluster (the sum of all indices of the Brownian loops that are part of the cluster) is an even multiple of $I_{C_k}$ goes to $0$. On the other hand, the switching property does in fact imply that the conditional probability that this cluster index is an even multiple of $I_{C_k}$ is $1/2$.
So, putting these two pieces together, we see that conditionally on $k \le K_\eps$, the probability that $C_k$ contains no macroscopic Brownian loop goes to $1/2$ as $N \to \infty$ [this fact complements the a priori estimate that says that with a probability that goes to $1$, no cluster contains more than one macroscopic loop].
\end {itemize}
So, together with Corollary \ref {co3}, this proves a number of the statements in Theorem \ref {thm} -- the remaining to be proved ones being essentially the ones about the presence/absence of mesoscopic loops that we will derive in Section \ref {S5}.

\medbreak
Note that at the end of the day, it is possible to check that with high probability,  $|I_{C_k}| = |i( C_k, \Delta)|$ for all $k \le K_\eps$ -- indeed, the probability that some large Brownian loop that winds around $\Delta$ is part of a cluster for which $|i(C_k, \Delta)|$ is smaller than the index of the loop goes to $0$ (as it would require an extra-connection outside of $\beta$ of the type that will be excluded in the course of the proof).

\subsection {Proof of the proposition}
\label {S42}
This is probably the most intricate proof in this paper. We will decompose it into several steps:
We want to show that for all fixed $\eps$ and $\eta$, if we set
$$ {\mathcal M} (N) := \{ \exists k\le K_\eps , \ d_H (B^1 (C_k), B^2 (C_k)) >  N^{\beta} \},$$
then when $N$ is large enough,
$P  [{\mathcal M}(N) ] \le \eta$.

\subsubsection* {Step 1: Reduction using the previous estimates}
A first remark is that  since
$P [ K_\eps  (N) \ge k ] \le E[ K_\eps (N) ] / k \le (\sup_N E [ K_\eps (N)]) / k$, we use Corollary \ref {co3} and  choose $k _0 =k_0 (\eta)$ in such a way that for all $N$,
$P [ K_\eps (N) \ge k_0 ] \le \eta/ 100$. So, we do not really to worry about the configurations with many large clusters.

When $k \le K_\eps$, we write $B^1_k = B^1 (C_k)$ and $B_k^2 = B^2 (C_k)$.
We know that the probability that $B^1_k$ is really part of the original Brownian loop-soup is at least $1/2$ (for each $k$, and independently of $C_k$). Hence, one can view the collection $(B_k^1)_{k \le K_\eps}$ as a subset of the union of two (correlated) Poisson point process of Brownian loops in $\Lambda_N$ of diameter at least $\eps N /4$. This allows to use a priori results about the properties of these Brownian loops.
The path $B^2_k$ is only ``virtually'' a Brownian loop in a loop-soup, but since by definition, $(B_k^1)_{k \le K_\eps}$ and $(B_k^2)_{k \le K_\eps}$ have the same law, this second collection has the same a priori properties.

In particular, we can apply some of the preliminary estimates to these collections of loops. For instance, the probability of the event
$\tilde {\mathcal E}_1 ( \alpha, \beta, \gamma)$ that at least one of the loops $(B_k^2)_{k \le K_\eps}$ has a $(\alpha, \beta, \gamma)$ pinching as in Lemma~\ref {L1} is bounded by twice the probability of ${\mathcal E}_1 (\alpha, \beta, \gamma)$ in Lemma~\ref {L1} and therefore goes to $0$ as $N \to \infty$. Similarly, the probabilities of the events $\tilde {\mathcal E}_2 (\alpha, \beta, N)$ and  $\tilde {\mathcal F} (\alpha, \gamma, N)$ defined analogously but this time for the family $(B_k^1)_{k \le K_\eps}$  and in reference of the events in Lemmas \ref {Lnotwo} and \ref {L2}, do go to $0$ as $N \to \infty$.

Hence, it is in fact sufficient to prove that for all $N$ large enough
$$
P [ {\mathcal M} (N) \cap \{ K_\eps(N) \le k_0 \} \cap \tilde {\mathcal E}_1^c (\alpha, \beta, \gamma, N) \cap  \tilde {\mathcal E}_2^c (\alpha, \beta,N) \cap \tilde {\mathcal F}^c (\alpha, \gamma, N)] < \eta/2$$
[the superscript $c$ means that we are considering the complements of the events].
Let us denote this intersection event by ${\mathcal N}(N)$.

\subsubsection* {Step 2: A further a priori feature of $B_k^1$ and $B_k^2$}

Let $\Pi_0$ denote the plane $\R^2 \times \{ 0 \}^{d-2}$, and let $\pi_0$ be the orthogonal projection on $\Pi_0$ (we will use this both in the continuum and for discrete lattices). For each positive $\xi$, we can find a deterministic finite collection of points $(z_i)_{ i \le J}$ (where $J = J(\xi)$) on $\Pi_0$ so that any point in $\Pi_0 \cap \pi_0 ( [-1,1]^d)$
is at distance less than $\xi /4$ from at least one of the $z_i$'s. For each $N$, we then define $y_i := N z_i$ (so that each point in $[-N, N]^d$ is at distance at most $\xi N /4$ of at least one of the $\Delta_i^N := \pi_0^{-1} (y_i)$). We then define the event $A_1 ( \eps, \xi, i, N)$ that the loop $B_k^1$ does wind around $\Delta_i^N$ while staying at distance at least $\xi N$ from it (and the similar event $A_2 (\eps, \xi, i, N)$ for the loop $B_k^2$).

\begin {lemma}
\label {proof1}
For every $\eps$ and $\eta$, one can find $\xi$ small enough such that for all $N$ large enough,
$$ P [ \cup_{i \le J} A_1 (\eps, \xi, i, N ) ]  \ge 1 - \eta /20  .$$
\end {lemma}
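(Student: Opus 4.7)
\textit{The plan} is to reduce the statement to an estimate on the Brownian loop measure and then to establish that estimate by splitting the loops into two regimes, distinguishing small vs.\ large projection diameter on $\Pi_0$.

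\textit{Reduction to a loop measure estimate.} Let $\mathcal F_\xi^N$ denote the set of Brownian loops $L \subset \Lambda_N$ with $\mathrm{diam}(L) \ge \eps N/4$ whose projection $\pi_0(L)$ fails to wind around any of the points $y_i$ ($i \le J$) while staying at distance at least $\xi N$ from it. The construction of $B^1(C_k)$ as a (a.s.\ uniformly chosen) real large loop contained in $C_k$ -- which by Lemma~\ref{Lnotwo} is with high probability unique -- implies that distinct values of $k \le K_\eps$ with $B^1(C_k) \in \mathcal F_\xi^N$ correspond to distinct loops $L \in \mathcal F_\xi^N \cap {\mathcal L}$. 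Campbell's formula for the Poisson process ${\mathcal L}$ then gives
\[ E\bigl[ \#\{k \le K_\eps : B^1(C_k) \in \mathcal F_\xi^N\}\bigr] \le \mu_N(\mathcal F_\xi^N), \]
where $\mu_N$ is the Brownian loop measure on $\Lambda_N$. By Markov's inequality, it suffices to show $\mu_N(\mathcal F_\xi^N) \le \eta/20$ for $\xi$ small enough (uniformly in large $N$).

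\textit{Splitting into two regimes.} Introduce an intermediate scale $\xi'$ with $\xi \ll \xi' \ll \eps$, and split $\mathcal F_\xi^N$ depending on whether $\mathrm{diam}(\pi_0(L))$ is less than or at least $\xi' N$. For the first regime, the coordinate-permutation symmetry of the loop measure on $\Z^d$ combined with standard concentration estimates on the range of 1D random walk shows that the $\mu_N$-mass of loops with $\R^d$-diameter $\ge \eps N/4$ but $\Pi_0$-projection diameter $< \xi' N$ is a fraction of the total mass $\mu_N(\{L : \mathrm{diam}(L) \ge \eps N/4\}) \le M_\eps < \infty$ (from Section 3) that vanishes as $\xi'/\eps \to 0$. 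For the second regime, $\pi_0(L)$ is a 2D lattice loop of diameter $\ge \xi' N$; a quantitative 2D winding estimate (see below) asserts that, with conditional probability tending to $1$ as $\xi/\xi' \to 0$, $\pi_0(L)$ has a winding point in its complement at distance at least $2\xi N$ from itself. The $\xi N/4$-net property of the $y_i$'s then yields a grid point within $\xi N/4$ of this winding point, which by continuity of the winding function on the complement of $\pi_0(L)$ still witnesses $A_1(\eps, \xi, i, N)$.

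\textit{Conclusion and main obstacle.} Given $\eta>0$, first pick $\xi'$ small so the first-regime contribution is at most $\eta/40$, then $\xi$ small (depending on $\xi'$) so the second-regime contribution is at most $\eta/40$; this yields $\mu_N(\mathcal F_\xi^N) \le \eta/20$, as required. The key difficulty is the quantitative 2D winding estimate -- that a 2D random-walk loop of diameter $D$ typically has a winding region whose inradius is comparable to $D$. One direct approach is to condition on two approximately antipodal points $u, v$ on $\pi_0(L)$ at distance $\sim D$; the two arcs between them are conditionally nearly independent random-walk bridges, and lie on opposite sides of the segment $uv$ with positive probability uniformly in $N$, producing a winding strip of width $\sim D$ around the midpoint. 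A cleaner alternative is to rescale by $1/N$ and pass to the continuum 2D Brownian loop via the invariance principle, invoking the scale invariance of the 2D Brownian loop measure and concluding via a tightness/dominated convergence argument.
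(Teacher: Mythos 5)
Your reduction step contains a genuine error. You assert that $B^1(C_k)$ is constructed ``as a (a.s.\ uniformly chosen) real large loop contained in $C_k$'' and conclude that distinct $k$ with $B^1(C_k)\in\mathcal F^N_\xi$ give distinct loops of the actual loop-soup ${\mathcal L}$, so that Campbell's formula applies directly. That is not how $B^1(C_k)$ is defined: it is a sample from the conditional law (given $C_k$) of a large loop contained in $C_k$, and only with conditional probability at least $1/2$ does $C_k$ actually contain such a loop, in which case the coupling makes $B^1(C_k)$ equal it. With the remaining conditional probability $B^1(C_k)$ is a purely ``ghost'' loop and does not belong to ${\mathcal L}$, so the injection you invoke fails. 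The fix is the stochastic domination the paper actually uses: since $P[B^1(C_k)\in\mathcal F^N_\xi\mid C_k]\le 2\,P[B^1(C_k)\in\mathcal F^N_\xi\text{ and }B^1(C_k)\in{\mathcal L}\mid C_k]$, one gets $E[\#\{k\le K_\eps: B^1(C_k)\in\mathcal F^N_\xi\}]\le 2\mu_N(\mathcal F^N_\xi)$, so the target should be $\mu_N(\mathcal F^N_\xi)\le\eta/40$.

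Beyond that, your route diverges from the paper's and the crux of it is only sketched. You split $\mathcal F^N_\xi$ according to the $\Pi_0$-projection diameter and then need (i) a small-projection estimate via one-dimensional range concentration and (ii) a quantitative two-dimensional winding estimate, uniform in $N$. You explicitly flag (ii) as ``the key difficulty'' and offer two possible strategies without carrying either out, so the proposal as written is incomplete exactly where the work is. The paper avoids both regimes at once: it uses the convergence (from Lawler--Trujillo Ferreras, cited as \cite{LT}) of the rescaled finite collection of cable-graph loops of diameter $\ge \eps N/4$ to the corresponding finite collection of continuum Brownian loops in $[-1,1]^d$, and then chooses $\xi$ and the net $(z_i)$ so that, with probability $\ge 1-\eta/100$, every continuum Brownian loop of diameter $\ge\eps/2$ winds around some $\Delta_{z_i}$ while staying at distance $\ge 2\xi$ from it (which holds because a continuum Brownian loop almost surely winds at positive distance around some $(d-2)$-plane, and the relevant exceedance probabilities are continuous in $\xi$). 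This single soft step absorbs both your sub-estimates. Your ``cleaner alternative'' for (ii) is exactly this invariance-principle argument; once you commit to it, the separate small-projection regime becomes unnecessary, and the more elementary direct route you sketch (conditioning on two antipodal points and arguing about nearly-independent bridges) would require considerably more work to make rigorous and uniform in $N$ than the lemma warrants.
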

In other words and loosely speaking, this means that with high probability, each of the $\pi_0 (B_k^1)$ and $\pi_0 (B_k^2)$ will wind around one of the finitely many
``$\xi N$-macroscopic holes''.

\begin {proof}
This simply follows from the convergence of the (random finite collection) of loops $(B_k^1, k \le \min ( K_\eps, k_0) )$ to the corresponding collection of continuum Brownian loops (using the uniform convergence), which is for instance established in \cite {LT}. The points $y_i$ correspond to $N z_i$ where the $z_i$'s are chosen in such a way that with probability at least $1- \eta /100$, any continuum Brownian loop of diameter greater than $\eps /2$ in a loop-soup in $[-1, 1]^d$ does wind around at least one $\Delta_{z}$ while staying at distance at least $2 \xi$ from it (and therefore winds around of the $\Delta_{z_i}$'s while staying at distance at least $\xi$ from it).
\end {proof}

Through the coming sections (with $\eps$ and $\eta$ fixed), we choose $\xi$ as in this lemma. In view of these first steps, in order to prove the proposition, it is
in fact sufficient to prove that (still for each fixed $\eps$, $\eta$, $\xi$) for each $i_1, i_2 \le J(\xi)$,
the probability of
$$ P [ {\mathcal N}(N)  \cap  A_1 (\eps, \xi, i_1, N) \cap A_2 (\eps , \xi, i_2, N) ] \to 0  $$
as $N \to \infty$.
Indeed, one can then sum this over the $J(\xi)^2$ values of $(i_1, i_2)$ to see that
$$ P [ {\mathcal N}(N) \cap ( \cup_{i \le J} A_1 (\eps, \xi, i, N ) ) \cap ( \cup_{i \le J} A_1 (\eps, \xi, i, N ) ) ] \to 0, $$
so that one indeed has
$$ P [ {\mathcal N}(N) ] \le \eta/2$$
for all $N$ large enough.
\medbreak
 In the coming two subsections, we will fix $i_1$ and $i_2$ and simplify notations by letting (the dependence in $N$ being implicit) $\Delta_1:=N \Delta_{i_1}^0$ and $\Delta_2 := N \Delta_{i_2}^0$, $A_1 =  A_1 (\eps, \xi, i_1, N)$ and $A_2 =   A_2 (\eps , \xi, i_2, N)$. The goal is therefore to prove that $P[ {\mathcal N} \cap A_1 \cap A_2 ]$ goes to $0$ as $N$ tends to infinity.

\subsubsection* {Step 3: $B_k^1$ and large cycles in $B_k^2$ have to intersect}

Our next step is to combine Lemma \ref {Lnotwo} with the switching property to see that $B_k^1$ and any large cycle contained in $B_k^2$ will intersect (with high probability).
More precisely, we will prove the following stronger statement:
\begin {lemma}
\label {L55}
Let ${\mathcal H}(N)$ be the event that for all $k \le K_\eps$,  the loop-soup obtained with just $B_k^1$ removed does contain no cluster that does (a) intersect the trace of $B_k^1$ and (b) also contain a cycle around $\Delta_2$ while staying at distance at least $\xi N $ from it. Then $P[{\mathcal H}(N)] \to 0$ as $N \to \infty$.
\end {lemma}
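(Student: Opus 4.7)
My plan is to combine the switching property (Proposition~\ref{switch}) with the two-mesoscopic-loops estimate (Lemma~\ref{Lnotwo}) to reach a contradiction on $\mathcal H(N)$. Informally: under $\mathcal H(N)$, removing $B_k^1$ must separate the macroscopic cycle traced by $B_k^2$ around $\Delta_2$ from the rest of the cluster $C_k$, and Proposition~\ref{switch} then forces $C_k$ to contain a second macroscopic Brownian loop distinct from $B_k^1$, a scenario ruled out by Lemma~\ref{Lnotwo}.

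I would first absorb the previous reductions by conditioning (at negligible cost) on $\mathcal N(N)\cap A_1\cap A_2\cap\{K_\eps\le k_0\}$. Fix $k$ with $A_2$ holding; then the trace of $B_k^2$ lies in $C_k$ and winds around $\Delta_2$ at distance at least $\xi N$. Let $\widetilde{\mathcal L}_k$ denote the loop-soup $\mathcal L$ with $B_k^1$ removed (the case $B_k^1\notin\mathcal L$ is incompatible with $\mathcal H(N)$, since $C_k$ itself would then be a cluster of $\widetilde{\mathcal L}_k=\mathcal L$ satisfying both (a) and (b)). Under $\mathcal H(N)$, the cluster $C'$ of $\widetilde{\mathcal L}_k$ that contains the trace of $B_k^2$ must be disjoint from the trace of $B_k^1$.

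Next, I would apply the switching property in the subgraph $G:=\{x\in\Lambda_N:\mathrm{dist}(x,\Delta_2)\ge\xi N/2\}$ (so $G\cap\Delta_2=\emptyset$). Using the good events $\widetilde{\mathcal E}_1^c(\alpha,\beta,\gamma,N)$ and $\widetilde{\mathcal F}^c(\alpha,\gamma,N)$ applied to $B_k^2$ (whose law equals that of $B_k^1$), one checks that any loop of $\widetilde{\mathcal L}_k$ percolating to the trace of $B_k^2$ inside $C'$ must itself stay in $G$; hence the cluster $C'_G$ of the restricted loop-soup $\widetilde{\mathcal L}_k\!\upharpoonright_G$ that contains the trace of $B_k^2$ still carries a cycle around $\Delta_2$ at distance at least $\xi N$. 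Proposition~\ref{switch} applied to $\widetilde{\mathcal L}_k\!\upharpoonright_G$ with $\Delta_0=\Delta_2$ then gives that, conditionally on $C'_G$, with probability at least $1/2$ the cluster $C'_G$ contains a loop $B'\in\widetilde{\mathcal L}_k\subset\mathcal L$ of non-zero index around $\Delta_2$; any such $B'$ has diameter at least $\xi N$.

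Since $B'\subset C'_G\subset C'\subset C_k$, this produces two distinct loops $B_k^1$ and $B'$ of $\mathcal L$ in the same cluster $C_k$, of diameters at least $\eps N/4$ and $\xi N$ respectively. Picking $\alpha<1$ with $2\alpha(d-2)>d+2$ (possible since $d\ge 7$), Lemma~\ref{Lnotwo} bounds the probability of such a configuration by $O(N^{-\nu})=o(1)$. A union bound over $k\le k_0$ and over the finitely many choices of $\Delta_2$ from Step~2 yields $P[\mathcal H(N)]=o(1)$, as required. The main obstacle I expect is the delicate verification that $C'_G$ is a cluster of a genuine (unconditioned) loop-soup on $G$ to which Proposition~\ref{switch} applies verbatim: this requires separating the conditioning on $B_k^1$ (which may or may not itself intrude into $G$) from the Poissonian structure on $G$, and leveraging the a priori estimates of Lemmas~\ref{L1}, \ref{Lnotwo} and~\ref{L2} to confine the relevant cluster.
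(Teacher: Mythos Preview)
Your overall strategy --- combine Proposition~\ref{switch} with Lemma~\ref{Lnotwo} to force a second large Brownian loop into the cluster $C_k$ --- is exactly the paper's. But your implementation has a genuine gap at precisely the point you flag as an obstacle, and the paper resolves it with an idea you do not use.

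The core difficulty is that $\widetilde{\mathcal L}_k=\mathcal L\setminus\{B_k^1\}$ is \emph{not} a Poisson point process: $B_k^1$ is selected as a function of the whole loop-soup (it is the large loop in the random cluster $C_k$), so removing it destroys the Poissonian structure, and Proposition~\ref{switch} does not apply to $\widetilde{\mathcal L}_k\!\upharpoonright_G$. You acknowledge this but do not resolve it. The paper's device is a \emph{partial resampling}: independently delete each loop of diameter $\ge \eps N/4$ with probability $1/2$ and add an independent half-intensity copy of those loops. The resulting soup $\mathcal L'$ has exactly the law of $\mathcal L$ (so switching applies to it), and, on $\{K_\eps\le k_0\}$, the resampling equals ``remove exactly $B_k^1$ and add nothing'' with probability bounded below by a constant depending only on $k_0$ and $\eps$. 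This transfers the switching conclusion from the genuine Poisson soup $\mathcal L'$ back to $\mathcal L\setminus\{B_k^1\}$ at the cost of a fixed multiplicative constant, after which Lemma~\ref{Lnotwo} finishes the job. Without this trick (or an equivalent one), your application of Proposition~\ref{switch} is not justified.

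There is also a logical error in your set-up. You assert that under $\mathcal H(N)$ the cluster $C'$ of $\widetilde{\mathcal L}_k$ containing the trace of $B_k^2$ is disjoint from the trace of $B_k^1$. But if such a $C'$ exists at all (i.e.\ if the trace of $B_k^2$ is entirely covered by loops of $\widetilde{\mathcal L}_k$), then $C'\subset C_k$ and $C'$ \emph{must} meet the trace of $B_k^1$: otherwise $C'$ would remain a full cluster of $\mathcal L$, hence equal $C_k$, which does contain $B_k^1$. So on $\mathcal H(N)\cap A_2$ the correct conclusion is not that $C'$ avoids $B_k^1$ but that $C'$ does not exist --- some portion of $B_k^2$ is covered only by $B_k^1$ --- and then there is no cluster to which you can apply switching. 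The paper sidesteps this entirely by never invoking $B_k^2$ in the proof of this lemma; it works directly with clusters of the resampled soup $\mathcal L'$.
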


\begin {proof}
The first idea is (for each given $\eps$ and each loop-soup in $\Lambda_N$) to partially resample the Brownian loops of diameter greater than $\eps N / 4$ by (a) removing each of them independently with probability $1/2$ and (b) sampling another Poisson point process of Brownian loops with diameter greater than $\eps N /4$ with half the intensity  -- intuitively, one resamples ``half'' of these large loops. The new obtained loop-soup is then also distributed as a Brownian loop-soup to which we can also apply the switching property. Furthermore, when one resamples, the probability that one removes exactly one Brownian loop (here $B_k^1$) and adds no Brownian loop of diameter greater than $\eps N/4$ from the new loop-soup is bounded from below by $2^{-k_0-1}$ (when conditioned on the event that $K_\eps \le k_0$).

Then, on this event, if we observe a cycle $\Gamma$ around $\Delta_2$ in one of the new clusters (created by removing $B_k^1$) that winds around $\Delta_2$ while staying at distance $\xi N$ from it, then we  have the similar dichotomy as before:
\begin {itemize}
 \item Either there initially was a Brownian loop intersecting $\Gamma$ and getting $\xi N/2$ close to $\Delta_2$. In that case, it means that there exists a cluster $C_k$ in the initial loop-soup that contains both $B_k^1$ and another Brownian loop of diameter at least $\xi N/2$.
 \item Or if not, then we can apply the switching property to the cluster of Brownian loops (that still contains $\Gamma$) obtained when removing $B_k^1$ as well as all the loops that get to distance smaller than $\xi N/2$ from $\Delta_2$ and conclude that with positive conditional probability, this remaining loop-soup contains a Brownian loop that winds around $\Delta_2$, that therefore also has a diameter at least $\xi  N$.
\end {itemize}
So, altogether, the probability in Lemma \ref {L55} is bounded by a constant times the probability that for some $k$, the cluster that contains $B_k^1$ also contains a second Brownian loop with diameter at least $\xi N/2$, which allows to conclude thanks to Lemma \ref {Lnotwo}.
\end {proof}

Note that this lemma shows in particular that the probability that for some $k \le K_\eps$, $B_k^1 \cap B_k^2 = \emptyset$  while $A_1$ and $A_2$ hold  goes to $0$. Indeed, if this was the case, then the trace of $B_k^2$ would contain a cycle satisfying condition (b).

\subsubsection* {Step 4: No long excursion of $B_k^2$ away from $B_k^1$}

We will conclude the proof of the proposition in this section by showing that the probability of ${\mathcal N}(N) \setminus {\mathcal H}(N)$ tends to $0$.

Let us first see how to combine the various inputs in the definition of this event:

\begin {itemize}
\item We note that on this event (as we just pointed out in the previous step), the two loops $B_k^1$ and $B_k^2$ will necessarily intersect. Therefore, a point on $B^2_k$ that is at distance greater than $N^\beta$ from $B^1_k$ will necessarily belong to some excursion $e$ of $B^2_k$ away from $B^1_k$.  On the event $\tilde {\mathcal F}(\alpha, \gamma, N)$, this implies that the two endpoints $u$ and $v$ of this excursion are not more than $N^{\gamma}$ apart.

\item
Furthermore, if $\tilde {\mathcal E}_1(\alpha, \beta, \gamma)$ does not hold, then none of the loops $B_k^2$ for $k \le K_\eps$ has an $(\alpha, \beta, \gamma)$-pinching.  So, if the excursion $e$ of $B_k^2$ away from $B_k^1$ has diameter greater than $N^\beta$ (which necessarily is the case if it contains a point that is at distance greater than $N^\beta$ from $B_k^1$) and has endpoints that are less than $N^\gamma$ apart, then it means that $B_2^k \setminus e$ (viewed as a continuous path from $u$ to $v$) has diameter smaller than $N^\alpha$. But since $B_k^1$ has diameter at least $\eps N /2$ by definition, this means that $e$ is in fact almost the entire loop $B_k^2$. But we can now use the no-pinching property again, to deduce that the diameter of $B_k^2 \setminus e$ is smaller than $N^\beta$. In particular, this shows that the diameter of $B_k^1 \cap B_k^2$ is smaller than $N^\beta$. We are therefore looking at a configuration as in Figure \ref {fig:4}.
\end {itemize}

\begin{figure}[h]
  \centering
  \includegraphics[width=
  \textwidth]{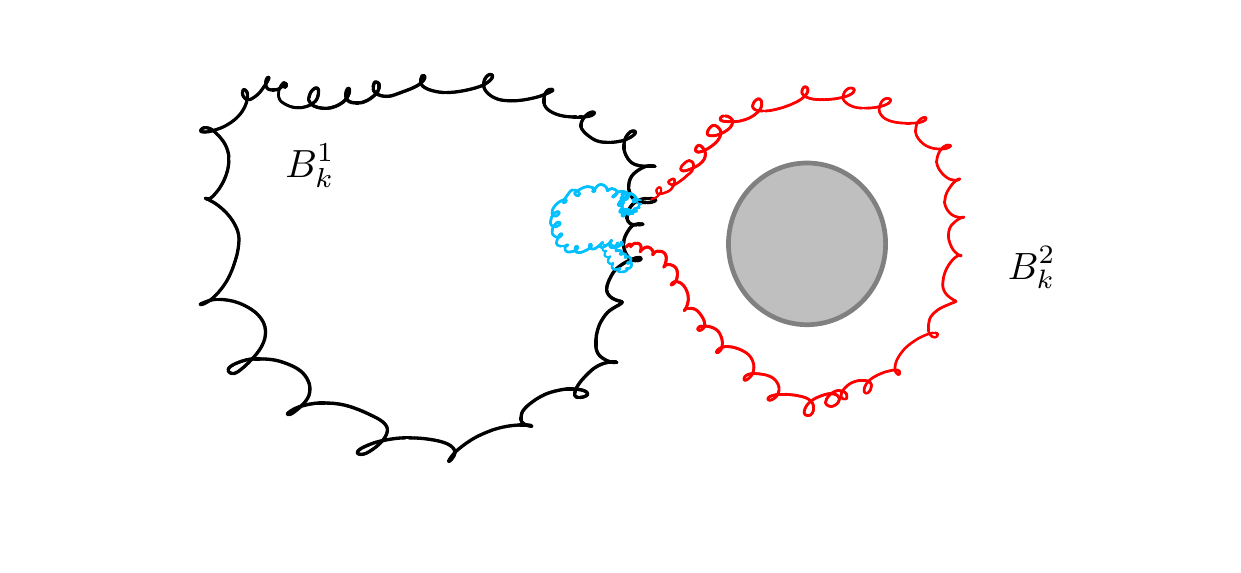}
  \caption{\label{fig:4}The remaining case to exclude: The excursion of $B_k^2$ away from $B_k^1$ is almost all of $B_k^2$}
  \end{figure}

The goal is therefore to show that this last unlikely looking scenario has indeed a very small probability.
Here, we can not use directly the very same trick where one removes $B_k^1$ from the loop-soup (by partial resampling) and directly applies the switching property to the remaining cluster, because that remaining cluster will not necessarily contain a large cycle anymore (indeed, $B_k^2$ could have ``used'' some part of the trace of $B_k^1$ that disappeared), so some further argument is required.

We have just argued that if ${\mathcal N}(N) \cap {\mathcal H}_N$ holds,
there exist two $N^{\gamma}$-close integer points $u$ and $v$ on (the $1$-neighborhood of) $B^1_k$, for which there exists a chain of loops (all of size smaller than $N^{\beta}$ -- this is because $\tilde {\mathcal E}_2 (\alpha, \beta)$ holds) in the loop-soup that join $u$ and $v$ and (if one adds the straight line from $u$ to $v$ to it) creates a cycle that has a non-trivial winding around  $\Delta_2$  while staying at distance greater than $\xi N/ 2$ from it.

One first key observation is that it is enough to focus on the case where the ordered chain of intersecting loops is a finite chain of loops that ``uses'' each Brownian loop only once along the way. Indeed, if a Brownian loop $B$ appears twice in the chain then either one can remove the part of the chain of loops between these two occurrences by just using the loop $B$ itself, or if this shortcut did create a much smaller cycle that does not wind around $\Delta_2$ anymore, it implies that there is a large cycle (around $\Delta_2$) in the cluster with $B_k^1$ removed, which contradicts ${\mathcal H}_N$ (see Figure \ref {fig:5}).

\begin{figure}[h]
  \centering
  \includegraphics[width=
  \textwidth]{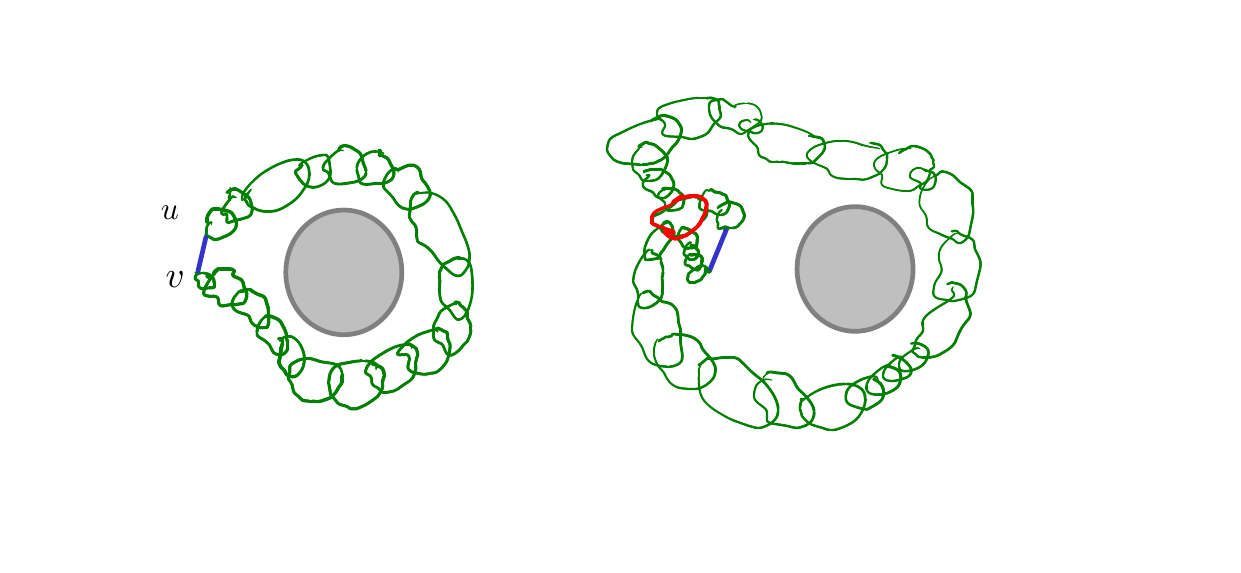}
  \caption{\label{fig:5}The chain of small/mesoscopic loops from $u$ to $v$ -- if one loops is used twice at different windings (here in bold/red), there exists a chain of loops surrounding $\Delta_2$ (right)}
  \end{figure}

A second observation is that this chain of loops can then be lifted to the universal cover (around $\Delta_2$) of $\Z^d \setminus \Delta_2$ in such a way that it contains a path from some fixed lift $u_0$ of $u$ to one of the lifts $v_i$ of $v$, where this lift is not allowed to be the lift $v_0$ that is $N^{\gamma}$-close to the starting point $u_0$.

For each fixed $u$ and $v$, we will now compare the probability of the previous event  with the probability of connections in the universal cover to  get an upper bound given by a constant (that depends on $\eps$ and $\xi$) times $1/ N^{d-2}$.

One way to proceed is the following: We start from $u$, and discover the loops of the cluster that contains $u$ (in the loop-soup obtained by keeping only loops of diameter at most $N^{\beta}$ that stay at distance $\xi N / 2$ from $\Delta_2$) iteratively, in a way that will allow to couple it with a loop-soup in the universal cover:
\begin {itemize}
 \item We first discover all the loops that go through $u$. This is a Poisson point process of loops, and the union of these loops is called $U_1$ and has diameter at most $2 N^{\beta}$.
 \item We then discover the loops that intersect $U_1$ but did not intersect $u$. The union $U_2$ of these loops with $U_1$ has then a diameter at most $4 N^{\beta}$.
 \item We continue iteratively
\end {itemize}
Given that at each step, one adds only loops of diameter smaller than $N^{\beta}$, one can associate to each discovered loop the winding number (from $u$) at the time of the discovery of that loop. Indeed, if the loop was discovered at the same step on two different sheets of the universal cover, then it implies that the cluster contains a cycle around $\Delta_2$, which is forbidden by the event ${\mathcal H}_N$.

This implies that, when restricted to the event that we are interested in, we can view the discovered loop as a subset of the loops in the universal cover (around $\Delta_2$) and that the connection from $u$ to $v$ that we are looking for is bounded by the probability a (non-restricted) loop soup in the universal cover of $\Z^d$ with the $\xi N /2$ neighborhood removed connects $u_0$ to at least one of the lifts $v_i$ of $v$ that are at distance at least $\xi N/2$ from $u_0$, which is bounded in terms of the Green's functions $G(u_0,v_i)$ in this universal cover. By summing over $i$, one readily sees that this is upper bounded by some constant times $1 / N^{d-2}$.

Summing over all $u$ and $v$ (recall that we ask $u$ and $v$ to be part of a loop of size of order $N$ as well that are at distance less than $N^{\gamma}$ from each other -- the sum of these probabilities is bounded by $N^{2 + 2 \gamma}$), one concludes that this probability is bounded by
a constant times $ N^{2 + 2 \gamma - (d -2 )}$ that  does indeed tend polynomially fast to $0$ as $N \to \infty$.

\subsection {About the mesoscopic loops in cycle-containing clusters}
\label {S5}

Finally, we complete the proof of the theorem by discussing the presence of mesoscopic Brownian loops in the clusters in ${\mathcal C}(\eps, N)$.

This section is divided into three quite different steps all describing features that occur with a probability that goes to $1$ as $N\to \infty$: We will first argue that in fact no cluster in ${\mathcal C}(\eps, N)$ will contain a Brownian loop with diameter larger than $\eps N /10$ that does not belong to ${\mathcal B}(\eps, N)$. A by-product of this fact will be that most clusters in ${\mathcal C}(\eps, N)$ will in fact be in ${\mathcal C}(\eps', N)$ for $\eps'$ a little bit bigger than $\eps$.  The second step will be to show that clusters in ${\mathcal C} (\eps, N)$ that contain no Brownian loop of diameter greater than $\eps N / 10$ do contain large cycles that are created by the loops of diameter smaller than $N^{\gamma_0}$ only (for any fixed $\gamma_0$ larger than $2 / (d-4)$). In the final step, we will explain that clusters in ${\mathcal C}(\eps, N)$ will contain no loop of diameter greater than $N^\beta$ apart from those loops that are in ${\mathcal B}(\eps, N)$ (recall that $\beta$ has been chosen to be fixed and larger than  $ 4 / (d-2)$).

\subsubsection* {Step 1: The clusters than contain large loops}
We first focus on the clusters that contain Brownian loops of diameter greater than $\eps N / 10$.
\begin {lemma}
\label {L51}
The probability that some cluster  in ${\mathcal C}(\eps, N)$ contains a Brownian loop of diameter greater than $\eps N /10$  that is not in ${\mathcal B} (\eps, N)$ tends to $0$.
\end {lemma}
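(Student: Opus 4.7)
The plan is to show that the hypothesized configuration forces the cluster $C$ to contain a second Brownian loop of diameter $\ge N^\alpha$, contradicting Lemma~\ref{Lnotwo} up to $o(1)$ probability. I will work throughout on the good event (of probability $1-o(1)$) on which Lemmas~\ref{L1}, \ref{Lnotwo}, and~\ref{Lo2} all hold for the fixed parameters $(\alpha,\beta,\gamma)$; on this event $B$ is the unique loop of diameter $\ge N^\alpha$ in $C$, so every other loop covering the winding cycle $\Gamma\subset C$ (which winds around some $(d-2)$-affine plane $\Delta$ at distance $\ge\eps N$, with $\Delta$ taken from the finite collection $\{\Delta_j\}$ of Section~4.1.1) has diameter $<N^\alpha$ and hence stays within the closed tube $T_r$ of radius $r:=\eps N - N^\alpha$ around $\Delta$. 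Let $G:=\Lambda_N\setminus T_r$ and let $\mathcal{L}_G$ denote the subset of loops in $\mathcal{L}$ that stay in $G$.

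The main step is a dichotomy on whether $B\in\mathcal{L}_G$. In the generic case when $B\in\mathcal{L}_G$ (in particular when $B\cap\Gamma=\emptyset$), the cycle $\Gamma$ lies entirely in the cluster $C_G$ of $\mathcal{L}_G$ containing it, because every loop covering $\Gamma$ is in $\mathcal{L}_G$. Applying the switching property (Proposition~\ref{switch}) to $(C_G,\Delta)$ in the subgraph $G$ yields, with conditional probability $\ge 1/2$, a loop $B^*\in C_G$ with non-zero index around $\Delta$. Since $B^*\in G$, its projection winds around $\Delta$ at distance $\ge r$; by absorbing a cosmetic $o(1)$ into $\eps$ (either by tightening the finite cover $\{\Delta_j\}$ or by initially working with $\mathcal{C}(\eps(1+o(1)),N)$), we arrange $B^*\in\mathcal{B}(\eps,N)$. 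The assumption $B\notin\mathcal{B}(\eps,N)$ then gives $B^*\ne B$; both lie in $C\supseteq C_G$ and both have diameter $\ge N^\alpha$, contradicting Lemma~\ref{Lnotwo}. The conditional-$1/2$ factor from the switching then gives $P(E\cap\{\text{generic case}\})\le 2\,P(\mathcal{E}_2(\alpha,\alpha,N))=o(1)$.

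The hard case is when $B\notin\mathcal{L}_G$, i.e.\ $B$ dips into $T_r$; since $B$ must also cover part of $\Gamma$ (otherwise we are again in the generic case), $B$ has arcs both inside $T_r$ and on $\Gamma$ outside the $\eps N$-tube. Because $B\notin\mathcal{B}(\eps,N)$, the loop $B$ does not wind around $\Delta$ by itself at distance $\eps N$, yet combining a portion of $B$ with a chain of small loops (of diameter $<N^\alpha$) covering the arcs of $\Gamma$ not covered by $B$ produces the cycle $\Gamma$ winding at distance $\eps N$. This is the same configuration excluded in Step~4 of the proof of Proposition~\ref{P7}: a mesoscopic reference loop augmented by a small-loop ``handle'' creating extra winding. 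The main obstacle is adapting that universal-cover lift to the present setting, where $B$ itself now plays the role of the reference loop. Lemma~\ref{L2} applied to $B$ forces the endpoints $u,v\in B$ of any such small-loop chain to lie within $N^\gamma$ of each other; Lemma~\ref{L1} prevents the missing winding from being supplied by $B$'s own short arc between $u$ and $v$; and the same Green's function estimate in the universal cover of $\Z^d\setminus\Delta$ as in Step~4 then bounds this contribution by $O(N^{2+2\gamma-(d-2)})\to 0$ after summing over admissible endpoint pairs. Combining the two cases yields $P(E)\to 0$.
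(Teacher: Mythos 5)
The tools you reach for are the right ones (the switching property in the tube-complement $G$, Lemmas~\ref{L1}, \ref{L2}, \ref{Lnotwo}, and the universal-cover Green's function estimate), and your organization into a ``$B$ stays out of the tube'' case handled by switching and a ``$B$ dips into the tube'' case handled \`a la Step~4 is a genuinely different decomposition from the paper's (which first proves $\Gamma\cap B\ne\emptyset$ via a resampling argument and then analyzes the winding of the $\Gamma$-excursions). However, there is a real gap that runs through both of your cases, and it is precisely the scenario that the paper closes with a separate ``knife's edge'' argument at the end of the proof: the event that $B$ \emph{itself} contains a self-avoiding cycle winding around $\Delta$ at distance greater than $\eps N - N^{\alpha}$ (or $\eps N-N^{b}$) but strictly less than $\eps N$ from $\Delta$, so that $B\notin\mathcal{B}(\eps,N)$ only by a lower-order margin.

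In your generic case, when $B\in\mathcal{L}_G$ and $B$ intersects $\Gamma$ (so $B\in C_G$), the switching in $G$ produces a loop $B^*$ with non-zero index that stays at distance $\ge r=\eps N - N^\alpha$ from $\Delta$, but this only puts $B^*$ in $\mathcal{B}(r/N,N)$, not $\mathcal{B}(\eps,N)$, and $B$ may itself wind at distance in $[r,\eps N)$; the identity $B^*=B$ is then entirely possible, and Lemma~\ref{Lnotwo} gives nothing. Your proposed ``cosmetic $o(1)$'' adjustment (replacing $\eps$ by $\eps(1+o(1))$) does not escape this: to pass back from $\mathcal{C}(\eps(1+o(1)),N)$ to $\mathcal{C}(\eps,N)$ one needs exactly Corollary~\ref{coprime}, which the paper \emph{derives from} Lemma~\ref{L51} --- so as stated this is circular. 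Similarly, in your hard case, you bound the probability that the chain of small loops produces the extra winding (via the universal-cover lift), and Lemma~\ref{L1} controls the short $B$-arc, but this leaves open the option that the long $B$-arcs carry the winding, i.e.\ that, after replacing $\Gamma$-excursions by short $B$-arcs as in the paper, $B$ contains a cycle winding at distance $\ge \eps N - N^b$. Nothing in your argument excludes this. What is missing is the final step of the paper: compare, via the scaling limit of rescaled cable-graph loops to continuum Brownian loops (\cite{LT}) and the scale-invariance of the Brownian loop measure, the probability that the maximal $\Delta$-distance of a winding cycle in some loop of the loop-soup falls exactly into a window of width $N^b$ just below $\eps N$; this is $o(1)$, and it is the ingredient that converts both of your near-misses into honest contradictions.
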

In other words, the probability that a large Brownian loop needs and uses ``extra help'' from the other loops in order to help its cluster to pass the threshold for being in ${\mathcal C}(\eps, N)$ goes to $0$. Note that we will upgrade this result in Step 3 (we will there in fact show that one can replace $\eps N /10$ by $N^\beta$ in the statement of the lemma).

\begin {proof}
Recall that the collection of Brownian loops of diameter greater than $\eps N / 10$ in the loop-soup is tight. If $B$ is such a loop, $C$ is its cluster and $\Gamma$ is a cycle in $C$ that ensures that $C \in {\mathcal C}(\eps, N)$ [i.e., this cycle will wind around some $\Delta$ while staying at distance at least $\eps N$ from it], then:

First, using the same arguments as before, we can observe that with high probability, $\Gamma$ and $B$ have to intersect. Indeed, we can resample the large Brownian loops, then (as before) make only $B$ disappear (with conditional probability bounded from below), and note that if $\Gamma \cap B = \emptyset$, the cluster containing $\Gamma$ for this resampled loop-soup would still be in ${\mathcal C}(\eps, N)$, and therefore contain a large Brownian loop with conditional probability $1/2$, which in turn would again contradict Lemma \ref {Lnotwo} for the initial loop-soup.

Next, we know by Lemma \ref {L2} that (with high probability), any excursion away from $B$ by $\Gamma$ has endpoints $u$ and $v$ that are less than $N^c$ apart for any fixed $c > 2/ (d -4)$. The probability that one of these excursions away from $B$ does more than a half-winding around some $\Delta$ is also bounded by the same argument as above (for this, one can first consider a finite set of appropriate $\Delta_i$'s).  On the other hand, $B$ will have no $(a,b,c)$ pinching points (with high probability, for well chosen $a>b$ in $(c,1)$), so that at least one of the parts of $B$ joining $u$ to $v$ has diameter smaller than $N^b$, and will therefore have the same (small) winding around $\Delta$ than the excursion by $\Gamma$ from $u$ to $v$. This indicates that the Brownian loop $B$ will actually contain a cycle around  $\Delta$ (one just follows $\Gamma$ and replaces each excursions of $\Gamma$ away from $B$ by the corresponding portions of $B$ of diameter smaller than $N^b$). Since $\Gamma$ is at distance at least $\eps N$ from some $\Delta$, it also follows that (with high probability), this cycle contained in $B$ will be at distance at least $\eps N - N^b$ from this same $\Delta$.

\begin{figure}[h]
  \centering
  \includegraphics[width=
  \textwidth,angle=180]{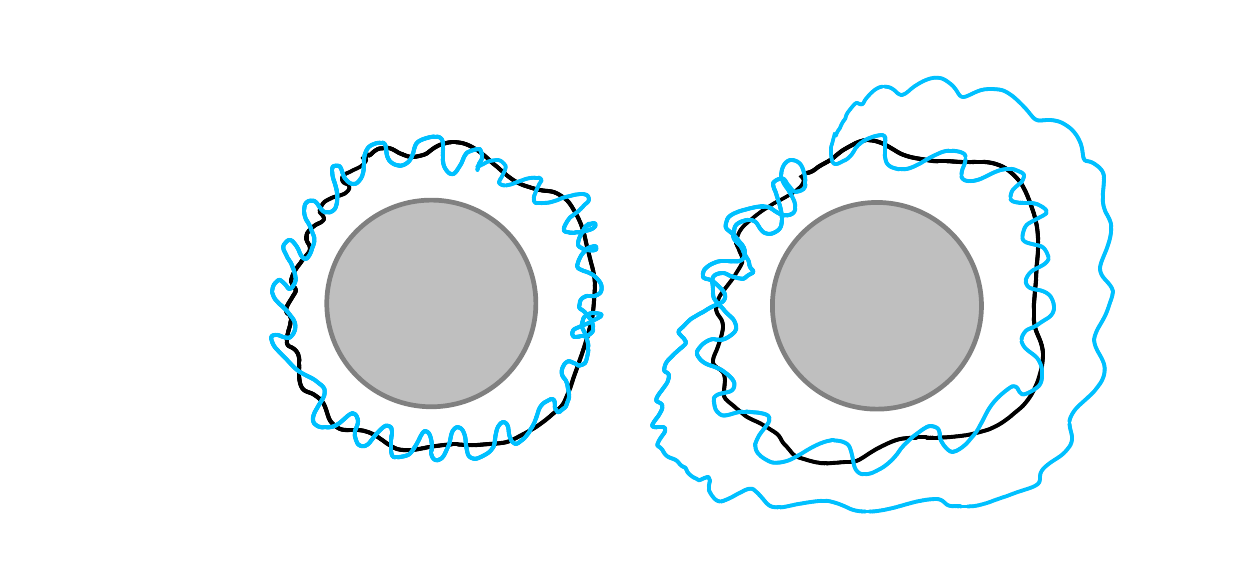}
  \caption{\label{fig:50}If $\Gamma$ (in dark) has no long excursion away from $B$ then either $B$ has a pinching point (left) or wraps around $\Delta$ (right)}
  \end{figure}

So, on the one hand, $B$ is not in ${\mathcal B}(\eps, N)$ so that it is at distance less than $\eps N$ from any $\Delta$ that it winds around, and on the other hand, we have just argued that (with high conditional probability), it contains a cycle at distance greater that $\eps N - N^b$ from one $\Delta$ that it wraps around. The probability of this happening for some loop in the loop-soup goes to $0$ as $N \to \infty$, because it can be asymptotically compared to the probability that the maximum (over $\Delta$'s that it winds around) distance of a continuum Brownian loop in $[-1,1]^d$  being exactly equal to $\eps$ (for instance using the convergence in \cite {LT}), which can be easily seen to be equal to $0$ (for instance using scale-invariance of the Brownian loop measure).
\end {proof}

Let us state one direct consequence of the previous proof as a separate statement:
\begin {corollary}
\label {coprime}
For any fixed $\eps$, the probability that ${\mathcal C}(\eps, N) \setminus {\mathcal C}(\eps', N)$ is empty goes to $1$ as $\eps' \to  \eps+$ independently of $N$ (i.e., the sup over $N$ of these probabilities goes to $0$).
\end {corollary}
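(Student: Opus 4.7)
The plan is to extract from the proof of Lemma \ref{L51} the principle that the minimum distance from a macroscopic cycle to a $(d-2)$-dimensional subspace $\Delta$ around which it winds has, in the scaling limit, an absolutely continuous distribution and so puts no mass on the threshold value $\eps N$. Combined with tightness of the number of clusters, this yields the uniform estimate.

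First, by Corollary \ref{co3}, $K_\eps(N)$ is tight, so for any $\delta > 0$ I can fix $k_0$ with $\sup_N P(K_\eps(N) > k_0) < \delta$. Using the finite cover $\{\Delta_j\}_{j \le M(\eps)}$ from Section 4.1.1, it suffices to show that for each $j$, the probability that some cluster contains a cycle winding around $N\Delta_j$ whose minimum distance to $N\Delta_j$ lies in the window $[\eps N, \eps' N]$ tends to zero as $\eps' \to \eps +$, uniformly in $N$. Summing these bounds over the at most $k_0$ clusters and the $M(\eps)$ subspaces then closes the argument.

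For a cluster $C$ that contains a Brownian loop $B$ of diameter greater than $\eps N/10$ (case (1) of the theorem), Lemma \ref{L51} forces $B \in {\mathcal B}(\eps, N)$, and the proof of that lemma shows that any witness cycle $\Gamma$ in $C$ agrees with $B$ up to Hausdorff distance $o(N)$. So the relevant min distance is that of the single loop $B$, up to $o(N)$. By the convergence in \cite{LT}, $B/N$ tends in distribution to a continuum Brownian loop in $[-1,1]^d$, whose min distance to a fixed $(d-2)$-plane has an absolutely continuous law by scale invariance of the Brownian loop measure---precisely the scaling-limit ingredient invoked at the end of the proof of Lemma \ref{L51}. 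Hence the rescaled min distance falling in $[\eps, \eps']$ has probability going to zero as $\eps' \to \eps+$, uniformly in $N$.

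For clusters of case (2) (no Brownian loop of diameter larger than $\eps N/10$), the cycle $\Gamma$ itself now plays the role of $B$. The plan here is to reduce to case (1): the conditional-on-the-cluster argument of Section 4.2 shows that, asymptotically, the law of the min distance from a deterministic witness cycle $\Gamma(C)$ to $N\Delta_j$ is the same in both cases, so the preceding bound transfers. The main obstacle will be making this reduction quantitative and uniform in $N$: one has to verify that $\Gamma$ stays within $o(N)$ Hausdorff distance of the hypothetical enveloping macroscopic Brownian loop that the switching property would provide, which boils down to reusing the no-pinching and no-long-excursion estimates of Lemmas \ref{L1} and \ref{L2} exactly as in the proof of Lemma \ref{L51}. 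Once this is in place, the same absolute-continuity bound from the continuum Brownian loop applies and the corollary follows.
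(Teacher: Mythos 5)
Your central idea is right: the quantity that decides membership in ${\mathcal C}(\eps, N)$ (the largest $r$ such that some cycle in the cluster winds around some $\Delta$ at distance $\ge r N$) is, after rescaling, asymptotically atomless because it converges to the analogous functional of a continuum Brownian loop, and scale invariance of the Brownian loop measure rules out an atom at $\eps$. That is exactly the ingredient the paper invokes at the end of Lemma \ref{L51}, and the paper presents Corollary \ref{coprime} as a direct consequence of that proof.

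Two remarks about execution. First, your case split is unnecessary, and the case (2) reduction via Proposition \ref{P7} is heavier than what the situation calls for (and its ``the law of the min distance is the same in both cases'' step needs to be formulated at the level of conditional laws given the cluster, since $\Gamma(C)$ is a deterministic function of $C$; stated as an unconditional claim it is not self-evident). The clean route, which I believe is what ``direct consequence'' means here, is to apply the switching lemma once, at the level of the boundary set itself: for each cluster $C \in {\mathcal C}(\eps, N) \setminus {\mathcal C}(\eps', N)$, the conditional probability given $C$ that it contains a Brownian loop of diameter at least $\eps N/4$ is at least $1/2$, so $P[\exists C \in {\mathcal C}(\eps, N) \setminus {\mathcal C}(\eps', N)] \le 2\,P[\exists C \in {\mathcal C}(\eps, N) \setminus {\mathcal C}(\eps', N)$ containing such a loop$]$. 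By Lemma \ref{L51}, any such loop $B$ is (up to an $o(1)$-probability error in $N$) in ${\mathcal B}(\eps, N)$, yet cannot be in ${\mathcal B}(\eps', N)$ since $C \notin {\mathcal C}(\eps', N)$. So the whole problem reduces in one step to bounding $P[\exists B \in {\mathcal B}(\eps, N) \setminus {\mathcal B}(\eps', N)]$, which is where the atomless scaling-limit argument enters. No separate treatment of ``clusters with no big loop'' and no Hausdorff-distance comparison is needed.

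Second, a small imprecision: you write that the proof of Lemma \ref{L51} shows $\Gamma$ agrees with $B$ up to Hausdorff distance $o(N)$. It does not; what it actually shows is that $B$ itself contains a cycle winding around some $\Delta$ at distance $\ge \eps N - N^b$, which is what lets you conclude $B \in {\mathcal B}(\eps - o(1), N)$. The excursions of $\Gamma$ away from $B$ are only controlled at their endpoints (Lemma \ref{L2}), not uniformly in Hausdorff distance. Fortunately, for the corollary only the conclusion about $B$ is needed, so your argument can be repaired by dropping the Hausdorff claim. Finally, as you note, uniformity in $N$ needs a word: the scaling-limit bound covers $N$ large, and for each fixed small $N$ the event probability tends to $0$ as $\eps' \downarrow \eps$ by monotone convergence since the relevant max-distance random variable is continuous; taking a max over finitely many small $N$ then gives the claimed $\sup_N$ statement.
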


Another useful consequence of this lemma is that when a cluster in ${\mathcal C}(\eps, N)$ does not contain a Brownian loop in ${\mathcal B}(\eps, N)$, the cycles that will ensure that it belongs to ${\mathcal C}(\eps, N)$ will necessarily involve chains of many loops of diameter much smaller than $N$. Indeed, by Lemma \ref {Lnotwo}, it can typically not contain more than one loop of diameter greater than $N^{9/10}$, and if the largest loop in the cluster has diameter smaller than $\eps N / 10$, the chain (which has diameter at least $\eps N$) will have to involve at least $9\eps N^{1/10}/10$ of the other loops with diameter smaller than $N^{9/10}$.

\subsubsection* {Step 2: Some features of clusters in ${\mathcal C}(\eps, N)$ that contain no macroscopic loops}

We will now focus on the clusters in ${\mathcal C}(\eps, N)$ that contain no Brownian loop of diameter greater than $\eps N /10$. In that case, since Lemma \ref {Lnotwo} indicates that it can contain no more than one Brownian loop of diameter greater than $N^{9/10}$, it implies a large number of loops have to be involved in creating a cycle that ensures that $C \in {\mathcal C}(\eps, N)$.

Our first result here is the following:
\begin {lemma}
The probability that some cluster in ${\mathcal C}(\eps, N)$ contains (a) No Brownian loop of diameter greater than $\eps N /10$ and (b) no cycle ensuring that the cluster is in ${\mathcal C}(\eps, N)$ that is only created by loops of diameter smaller than $N^{9/10}$ goes to $0$.
\end {lemma}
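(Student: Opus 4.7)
The plan is to reduce to the case of a single mesoscopic loop and then collapse the hypothetical witnessing cycle onto that loop to derive a contradiction, in the spirit of Lemma \ref{L51}. Concretely, I would argue by contradiction: suppose a cluster $C \in {\mathcal C}(\eps, N)$ has no Brownian loop of diameter greater than $\eps N/10$ and yet every witnessing cycle in $C$ uses some Brownian loop of diameter at least $N^{9/10}$. By Lemma \ref{Lnotwo} applied to a pair of exponents $(a,b)$ close enough to $1$ that $\nu > 0$ (with the understanding that the precise value $9/10$ in the statement may have to be slightly increased in the borderline case $d=7$), with probability tending to $1$ no cluster simultaneously contains two Brownian loops of diameter at least $N^{9/10}$. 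Hence on this high-probability event, $C$ contains exactly one loop $B$ of diameter in $[N^{9/10}, \eps N/10]$, and by assumption every witnessing cycle $\Gamma$ must pass through $B$.

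Fix such a $\Gamma$, which winds around some $(d-2)$-dimensional affine subspace $\Delta$ while staying at distance at least $\eps N$ from it, and choose exponents $c < b \le a$ with $a$ equal to the diameter exponent of $B$ such that the conditions of both Lemma \ref{L1} and Lemma \ref{L2} are simultaneously satisfied (this is possible as long as $a$ is taken close enough to $1$, which is precisely what dictates the threshold appearing in the statement). Working on the complement of the bad events of those two lemmas, one has: (i) any excursion $e$ of $\Gamma$ away from the trace of $B$ has endpoints $u, v$ on $B$ at distance at most $N^c$, for otherwise $u$ and $v$ would be connected by loops other than $B$, contradicting Lemma \ref{L2}; (ii) since $B$ admits no $(a,b,c)$-pinching, one of the two arcs of $B$ joining $u$ to $v$ has diameter at most $N^b$. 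Replacing each such excursion $e$ by the short arc of $B$ between its endpoints produces a closed path $\Gamma'$ entirely contained in the trace of $B$.

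The key remaining step, which I expect to be the main obstacle, is verifying that $\Gamma'$ still has non-trivial winding around $\Delta$. As in Lemma \ref{L51}, I would cover the admissible $\Delta$'s by a finite family $(\Delta_i)$ and, using the scaling limit of loops from \cite{LT}, ensure that with probability tending to $1$ no excursion $e$ of $\Gamma$ off $B$ makes more than a half-winding around any $\Delta_i$. Each short arc of $B$ has diameter at most $N^b$ while remaining at distance at least $\eps N - N^b$ from $\Delta$, so it too makes less than a half-winding around $\Delta$; it follows that each replacement preserves the winding around $\Delta$ exactly, and $\Gamma'$ inherits the non-zero winding of $\Gamma$. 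The contradiction then follows at once: $\Gamma' \subset B$ has diameter at most $\text{diam}(B) \le \eps N/10$, yet winds around $\Delta$ while remaining at distance at least $\eps N - N^b$ from it, which by projecting on the plane orthogonal to $\Delta$ forces $\text{diam}(\Gamma') \ge 2(\eps N - N^b)$, a quantity much larger than $\eps N/10$ for $N$ large. The whole argument is a direct adaptation of the proof of Lemma \ref{L51}, with the mesoscopic loop $B$ playing the role the macroscopic loop did there.
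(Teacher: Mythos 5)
Your proposal adapts the proof of Lemma~\ref{L51} by collapsing the witnessing cycle $\Gamma$ onto the single mesoscopic loop $B$, but this adaptation contains a genuine gap: the roles of ``big'' and ``small'' objects are reversed here, and this makes the key topological step fail. In Lemma~\ref{L51} the loop $B$ has diameter $\geq \eps N/10$ and $\Gamma$'s excursions away from it are short, so replacing each excursion by a short arc of $B$ preserves the winding. In the present lemma the situation is the opposite: $B$ has diameter \emph{at most} $\eps N/10$ (the cluster contains no larger loop by hypothesis~(a)), while $\Gamma$ has diameter at least $2 \eps' N$ and stays at distance $\geq \eps' N$ from $\Delta$. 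Lift $\Gamma$ to the universal cover around $\Delta$: the preimage of the small blob $B$ is a disjoint union of small blobs $B^{(n)}$, $n\in\Z$, the arcs of $B\cap\Gamma$ each stay inside a single lift $B^{(n)}$ (being of diameter $\leq \eps N/10$ at distance $\geq \eps' N$ from $\Delta$), and the excursions connect one lift to another. Since $\Gamma$ has total winding $\pm 1$, the integer shifts effected by the excursions must sum to $\pm 1$, hence at least one excursion \emph{deterministically} makes at least a full winding around $\Delta$. Your claim that ``no excursion makes more than a half-winding around any $\Delta_i$'' is therefore not an unlikely event to be ruled out by the scaling limit of \cite{LT} --- it is always false in this configuration --- and consequently the replacement of excursions by short arcs of $B$ does not preserve winding, so the loop $\Gamma'\subset B$ you construct need not wind around $\Delta$ at all. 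There is no contradiction to be derived from it.

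There is also a secondary quantitative problem that you gesture at but underestimate. To apply Lemma~\ref{L2} to $B$ you need $(d-2)a + (d-4)c - d > 0$ with $a = 9/10$ (the diameter exponent of $B$), and to apply Lemma~\ref{L1} you need $(a+b)(d-2) - d - c(d-4) > 0$ with $c < b \leq a = 9/10$. For $d=7$ the first condition forces $c > 5/6$ while the second then forces $b > 1$, which is impossible since $b \leq 9/10$. So for the borderline dimension the exponent $9/10$ in the statement simply cannot be accommodated by your scheme, whereas the paper's bound $N^{2 - g(d-4)}$ with $g = 9/10$ is negative already at $d=7$.

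The paper's actual proof exploits precisely the fact that an excursion of $\Gamma$ off $B$ \emph{must} make more than a half-turn around $\Delta$, and then proceeds by a dichotomy rather than by collapsing: either the chain of small loops realizing that excursion uses some loop twice at different lifts in the universal cover, in which case short-circuiting through that loop produces a cycle around $\Delta$ made entirely of loops of diameter $< N^{9/10}$ (contradicting hypothesis~(b) directly); or the chain uses each loop once, in which case the chain lifts injectively to the universal cover of $\Z^d\setminus\Delta$ and connects a lift $u_0$ of a point of $B$ to a far-away lift $v_i$ of another point of $B$, an event whose probability is $O(N^{-(d-2)})$ by the Green's-function estimate. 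Summing over all pairs of integer points on loops of diameter $\geq N^{9/10}$ gives the bound $O(N^{2 - (9/10)(d-4)}) \to 0$. This route sidesteps both Lemma~\ref{L1} and the delicate winding-preservation step entirely, and only requires $9/10 > 2/(d-4)$, which holds for all $d\geq 7$.
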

Combining this with Lemma \ref {L51}, this implies that (with a probability that goes to $1$ as $N \to \infty$) the clusters in ${\mathcal C}(\eps, N)$ that contain no loop in ${\mathcal B}(\eps, N)$ will contain large cycles created by  loops of diameter smaller than $N^{9/10}$ alone.
\begin {proof}
By Corollary \ref {coprime}, we see that it is sufficient to bound the probability that there exists a cluster in ${\mathcal C}(\eps', N)$ for which  (a) and (b) happens goes to $0$ (for any fixed $\eps' > \eps$).
By Lemma \ref {Lnotwo}, with a probability that goes to $1$, no cluster will contain more than one loop of diameter greater than $N^{9/10}$. Suppose that a cluster $C$ contains a cycle $\Gamma$ around some $\Delta$ that does stay at distance $\eps' N$ from it. If some loop $B$ of diameter in $[N^{9/10}, \eps N /10]$ intersects it, then $\Gamma$ will contain a ``long excursion'' away from $B$ that makes more than a half-turn around $\Delta$ and that is made with loops of diameter smaller than $N^{9/10}$. There are two options, just as in the proof of the main proposition:

\begin{figure}[h]
  \centering
  \includegraphics[width=
  \textwidth]{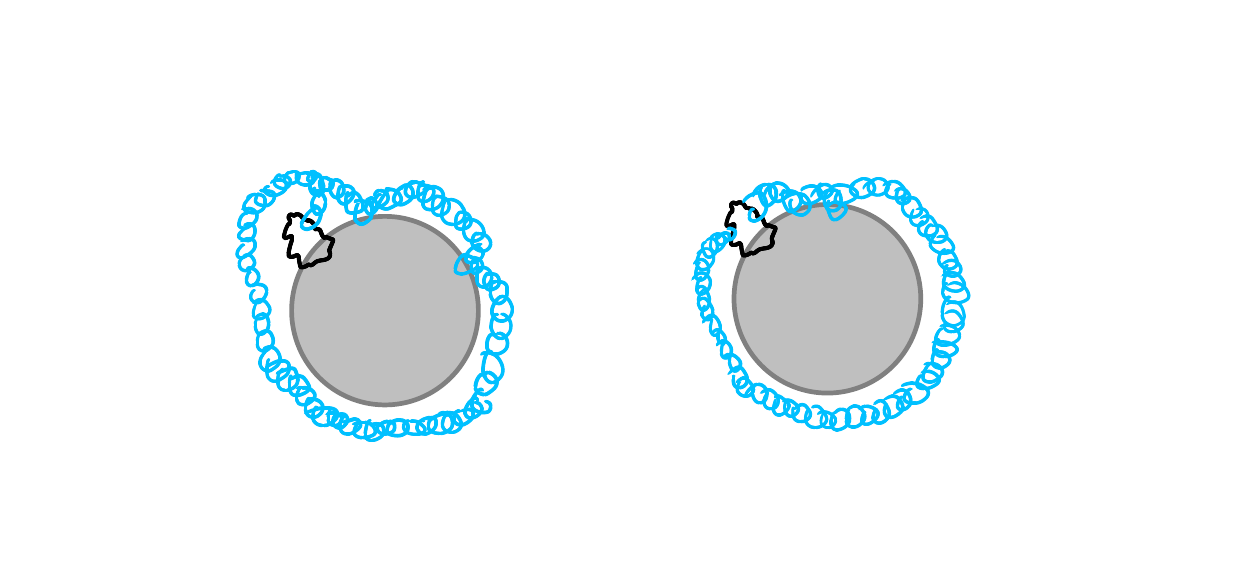}
  \caption{\label{fig:451}Either the chain contain itself a cycle around $\Delta$ (left) or it is a chain in the universal cover (right)}
  \end{figure}
\begin {itemize}
\item Either it uses the same Brownian loop twice at different winding numbers (i.e. for different lifts on the universal cover) -- in that case the chain of loops involved in the excursion will actually contain a cycle around $\Delta$ that stays at distance larger than $\eps' N- N^{9/10}$ (which is larger than $\eps N$ when $N$ is large) from it. So this chain of loops of diameter smaller from $N^{9/10}$ will ensure that the cluster is in ${\mathcal C}(\eps, N)$.
 \item Or one can modify the excursion in such a way that it uses each loop once along the chain. By the same argument with the universal cover (here, we can first use a given finite family of $\Delta_j$'s so that any cycle will wrap around one of them), we can upper bound the probability of this occurring by a constant over $N^{d-2}$ times the expected number of pairs of integer points on loops of diameter greater than $N^{g}$ in the loop for $g=9/10$. We get the upper bound of a constant times
 $$ N^{d(1-g)} N^{4g} / N^{d-2} = N^{2} / N^{g (d-4)}$$
 which indeed goes to $0$ as $N \to \infty$ as $g > 2 / (d-4)$.
\end {itemize}
\end {proof}

We can note that the $N^{9/10}$ threshold was used only to ensure that the cycle $C$ was using no loop of size greater than $N^{9/10}$, while the final estimate did only use the fact that $9/10$ is greater than $2 / (d-4)$. But now, the lemma itself does ensure that the cycle can anyway be chosen in such a way to use no Brownian loop of diameter greater than $N^{9/10}$.
This will enable us to  upgrade the result as follows (valid for any fixed value of $\gamma_0$ greater than $2 / (d -4)$):

\begin {lemma}
 \label {betterL}
The probability that some cluster in ${\mathcal C}(\eps, N)$ contains (a) No Brownian loop of diameter greater than $\eps N /10$ and (b) no cycle ensuring that the cluster is in ${\mathcal C}(\eps, N)$ that is only created by loops of diameter smaller than $N^{\gamma_0}$ goes to $0$.
\end {lemma}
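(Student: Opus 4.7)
The plan is to rerun the proof of the preceding lemma with $N^{\gamma_0}$ playing the role that $N^{9/10}$ played there, using the preceding lemma itself as input. As the remark just above the statement highlights, the only place where the exponent $9/10$ entered quantitatively in the earlier bound was in the condition $9/10 > 2/(d-4)$, so the identical estimate applies verbatim for any $\gamma_0$ in the admissible range.

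Concretely, combining Lemma \ref{L51}, the preceding lemma, and Corollary \ref{coprime}, I would first reduce to the event (of probability $1 - o(1)$) that the cluster $C$ under consideration lies in $\mathcal{C}(\eps', N)$ for some fixed $\eps' > \eps$, contains no Brownian loop of diameter exceeding $\eps N / 10$, and admits a witnessing cycle $\Gamma$ built solely from loops of diameter less than $N^{9/10}$. If $\Gamma$ uses some loop $B$ of diameter in the range $[N^{\gamma_0}, N^{9/10}]$, I would then form the chain of loops along $\Gamma$ joining the two endpoints $u, v$ of the excursion of $\Gamma$ around $B$, first reducing (as in the proof of Proposition \ref{P7}) to a minimal chain in which each loop appears at most once, and then applying the same dichotomy as before: either (i) some loop in the chain is visited at two distinct lifts of the universal cover around $\Delta$, in which case the chain itself already contains a cycle around $\Delta$ at distance at least $\eps' N - N^{9/10} > \eps N$ from $\Delta$ that does not involve $B$ and uses only loops of diameter less than $N^{9/10}$; or (ii) the chain injects into the universal cover of $\Z^d \setminus \Delta$, in which case the same Green's-function computation as in the preceding proof, summed over all candidate loops $B$ of diameter at least $N^{\gamma_0}$ and all pairs of integer points $u, v$ on them within distance $N^{\gamma}$ of each other, gives the total bound
\[
\frac{1}{N^{d-2}} \cdot N^{d(1-\gamma_0)} \cdot N^{4\gamma_0} \;=\; N^{2 - \gamma_0(d-4)} \;=\; o(1),
\]
the decay being governed precisely by the hypothesis $\gamma_0 > 2/(d-4)$.

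Once case (ii) is excluded with probability $1 - o(1)$, each case-(i) shortcut replaces the current $\Gamma$ by a new witnessing cycle using strictly fewer loops of diameter at least $N^{\gamma_0}$, and the iteration terminates with a cycle built entirely from loops of diameter less than $N^{\gamma_0}$. The main obstacle I expect in implementing this iteration rigorously is controlling the cumulative loss in the distance of the shortened cycle to $\Delta$, since each individual case-(i) shortcut can reduce this margin by as much as $N^{9/10}$: one has to complement the high-probability bound in (ii) by an a priori estimate (via Lemma \ref{Lnotwo} and Corollary \ref{co4}, together with a BK-type count of mesoscopic loops within a fixed cluster) showing that with high probability the number of loops of diameter at least $N^{\gamma_0}$ contained in any given cluster of $\mathcal{C}(\eps, N)$ is $o(N^{1/10})$, so that the accumulated distance loss stays well below $(\eps' - \eps)N$ throughout the iteration.
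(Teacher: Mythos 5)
Your core estimate and the dichotomy via the universal cover are exactly right, and your reduction (via Corollary~\ref{coprime} and the preceding lemma) to a witnessing cycle $\Gamma$ in $\mathcal{C}(\eps',N)$ built only of loops of diameter $<N^{9/10}$ matches the paper. The key difference is that you propose an \emph{iterative} shortcutting scheme, whereas the paper's argument is one-shot: from the (finite) collection of loops building $\Gamma$, one extracts a \emph{minimal} subset $B_1,\dots,B_n$ whose union still contains a cycle around $\Delta$; minimality forces it to be a circular chain where each $B_j$ meets only $B_{j\pm 1}$, so the sub-chain $B_2,\dots,B_n$ automatically injects into the universal cover. Taking $B_1$ to be the largest loop in that chain, one then directly bounds the probability that $\mathrm{diam}(B_1)\ge N^{\gamma_0}$ by the Green's function estimate times the expected number of pairs of integer points on loops of diameter $\ge N^{\gamma_0}$, exactly the quantity $N^{d(1-\gamma_0)}N^{4\gamma_0}/N^{d-2}$ you compute. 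No iteration, no accumulation of anything.

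This matters because the obstacle you flag at the end is not actually present, and the fix you propose would not work. The shortcuts never leave the fixed set of loops used to build $\Gamma$: each of those loops has diameter $<N^{9/10}$ and touches $\Gamma\subset\{\,\mathrm{dist}(\cdot,\Delta)>\eps' N\,\}$, so all of them, once and for all, lie at distance $>\eps'N-N^{9/10}>\eps N$ from $\Delta$. Any sub-chain or shortcut stays inside that same union, so the distance margin never degrades with the number of steps --- there is no cumulative loss to control. On the other hand, the estimate you propose as a patch (that a cluster of $\mathcal{C}(\eps,N)$ contains $o(N^{1/10})$ loops of diameter $\ge N^{\gamma_0}$) is both unnecessary and dubious: by Corollary~\ref{co4} such a cluster has $O(N^4)$ integer points, and a loop of diameter $N^{\gamma_0}$ occupies only $\sim N^{2\gamma_0}$ of them, so there is a priori room for $\sim N^{4-2\gamma_0}$ such loops, which for $\gamma_0$ close to $2/(d-4)$ is much larger than $N^{1/10}$. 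So you should drop the iteration entirely and replace it with the minimal-chain / largest-loop argument; the quantitative bound you already wrote then finishes the proof in one step. Also, a small typo: the constraint ``$u,v$ within distance $N^{\gamma}$'' should read $N^{9/10}$ (that is the diameter bound on $B$), though your final count does not use this restriction and is correct as written.
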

\begin {proof}
As in the previous proof, it is sufficient to focus on the clusters in ${\mathcal C}(\eps' , N )$.
Let $\Gamma$ be a cycle of a cluster $C$ that ensures that $C$ is in ${\mathcal C}(\eps' , N )$, and such that $C$ contains no loop of diameter greater than $\eps N /10$. By our previous lemma, by discarding an event of probability that goes to $0$ as $N \to \infty$, we can further assume that the cycle does not intersect any loop of diameter greater than $N^{9/10}$.
In particular, all the loops involved in creating this chain are at distance greater than $\eps N$ from some $\Delta$ that the cycle winds around.

It is then possible to choose a minimal subset $B_1, \ldots, B_n$ of these loops in such a way that their union contains a cycle around $\Delta$ while no proper subset of this collection of loops would contain such a cycle. In other words, the loops will form a circular chain around $\Delta$ with each $B_j$ intersecting only $B_{j-1}$ and $B_{j+1}$ (with the convention $B_{n+k}=B_{k}$).
Suppose that $B_1$ is the Brownian loop with the largest diameter in this cycle. Then it means that the other Brownian loops will contain a contain a connection from an integer point neighboring $B_1$ to another integer point neighboring $B_1$. We can then (using again the same argument with the universal cover) upper bound the probability that the diameter of $B_1$ is greater than $N^{\gamma_0}$ by a constant over $N^{d-2}$ times the expected number of pairs of points on some Brownian loop with diameter greater than $N^{\gamma_0}$. This is exactly the same bound as in the end of the proof of the previous lemma -- and since $\gamma_0 > 2/ (d-4)$, it indeed goes to $0$.
\end {proof}

\subsubsection* {Step 3: Wrapping up}
We have now seen that (all this with probability that goes to $1$ as $N \to \infty$), the $K_\eps$ clusters in $C \in {\mathcal C} (\eps, N)$ can be split into two subfamilies (here $\beta > 4 / (d-2)$ and $\gamma_0 > 2 / (d-4)$ are arbitrary but fixed):
\begin {itemize}
 \item The clusters than contain a Brownian loop in ${\mathcal B}(\eps, N)$. Note that these clusters will not contain any Brownian loop of diameter greater than $N^\beta$ by Lemma \ref {Lnotwo}.
 \item The clusters that contain no Brownian loop of diameter greater than $\eps N / 8$ and that contain a set of loops of diameter smaller than $N^{\gamma_0}$ that creates a cycle that winds around some $\Delta$ while staying at distance greater than $\eps N$ from it.
\end {itemize}
Since the number of Brownian loops in ${\mathcal B}(\eps, N)$ converges to a Poisson random variable (with mean given by the mass of Brownian loops of the corresponding set of loops in $[-1,1]^d$), it follows that $K_\eps$ converges to a Poisson random variable with twice this mean.
\medbreak

To wrap up, let us now argue that the latter clusters will in fact contain no Brownian loop of diameter greater than $N^\beta$. In other words:
\begin {lemma}
With a probability that goes to $1$ as $N \to \infty$, no cluster in ${\mathcal C}(\eps, N)$ contains a Brownian loop with diameter greater than $N^\beta$ that is not in ${\mathcal B}(\eps, N)$.
\end {lemma}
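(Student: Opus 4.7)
My plan is to combine Lemma \ref{Lnotwo}, Lemma \ref{L51}, and Lemma \ref{betterL} to reduce the bad event to a ``handle'' configuration that can be ruled out by a first-moment bound via Mecke's equation and Corollary \ref{co4}. The numerology will work out precisely at the threshold $\beta > 4/(d-2)$.

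First I would apply Lemma \ref{Lnotwo} with $b = \beta$ and some $a<1$ satisfying $a + \beta > 1 + 4/(d-2)$ (possible since $\beta > 4/(d-2)$): with probability $1-o(1)$, no cluster contains both a loop of diameter $> N^a$ and a distinct loop of diameter $> N^\beta$. Since any loop in ${\mathcal B}(\eps, N)$ has diameter at least $2\eps N > N^a$ for large $N$, on a cluster of type (1) the only loop of diameter $> N^\beta$ is the one already in ${\mathcal B}(\eps, N)$, and there is nothing to prove. I may therefore restrict to clusters $C$ of type (2). On these, Lemma \ref{L51} gives, with probability $1-o(1)$, that $C$ contains no loop of diameter greater than $\eps N / 10$; any putative offending loop $B$ in $C$ must then have diameter in $(N^\beta, \eps N/10]$.

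Next I would use Lemma \ref{betterL} with a fixed $\gamma_0 \in (2/(d-4), \beta)$ -- this interval is non-empty precisely because $d \ge 7$ makes $4/(d-2) > 2/(d-4)$. With probability $1-o(1)$, $C$ then contains a cycle $\Gamma$ witnessing $C \in {\mathcal C}(\eps, N)$ that is built exclusively from Brownian loops of diameter smaller than $N^{\gamma_0} < N^\beta$. In particular, $B$ is not one of those loops, so $\Gamma$ and its cluster $C'$ survive the removal of $B$, and $C' \in {\mathcal C}(\eps, N)$ for the reduced loop-soup $\mathcal{L} \setminus \{B\}$. Since $B$ and $C'$ share a cluster in the full $\mathcal{L}$, the loop $B$ must directly intersect $C'$ at some integer point $y$.

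Finally, I would bound the expected number of such offending $B$'s by Mecke's formula (using that $\mathcal{L} \setminus \{B\}$ has the same distribution as $\mathcal{L}$). This yields an upper bound of
\[
\sum_y p(y) \cdot \mu\bigl(\{B : \mathrm{diam}(B) > N^\beta,\ y \in B \}\bigr),
\]
with $p(y) := P[\,y \in \bigcup_{C \in {\mathcal C}(\eps, N)} C\,]$. By the standard warm-up in Section 3.1, each mass factor is $O(N^{-\beta(d-2)})$ uniformly in $y$, while Corollary \ref{co4} gives $\sum_y p(y) = O(N^4)$, so the total is $O(N^{4 - \beta(d-2)}) = o(1)$ because $\beta > 4/(d-2)$. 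Markov's inequality then concludes. The delicate step -- and the one that explains why $4/(d-2)$ is the exact threshold -- is this final counting: it crucially needs both the $N^4$ bound of Corollary \ref{co4} and the fact (produced by Lemma \ref{betterL}) that the macroscopic cycle does not involve $B$, which is what allows us to apply Mecke to a cluster in ${\mathcal C}(\eps,N)$ of $\mathcal{L} \setminus \{B\}$ rather than of $\mathcal{L}$.
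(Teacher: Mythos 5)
Your proof is essentially correct and follows the same strategy as the paper: reduce via Lemmas \ref{Lnotwo}, \ref{L51}, and \ref{betterL} to the situation where the macroscopic cycle is carried by loops of diameter less than $N^{\gamma_0}<N^\beta$, and then combine the $O(N^4)$ point count from Corollary \ref{co4} with the $O(N^{-\beta(d-2)})$ per-point mass of loops of diameter greater than $N^\beta$; the threshold $\beta>4/(d-2)$ makes the product $o(1)$. The paper implements this final counting via the independence of the two thinnings of the loop-soup into loops of diameter smaller versus larger than $N^\beta$, whereas you invoke Mecke's formula to detach the putative offending loop $B$ from the soup; these are two standard faces of the same Poisson-process computation, so this is not a genuinely different route.

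One small imprecision: $B$ need not intersect $C'$ at an integer point of the cable graph, so the step ``$B$ must directly intersect $C'$ at some integer point $y$'' is not literally true. The paper circumvents this by working with $U$, the $1$-neighborhood of the relevant clusters: a loop of macroscopic diameter that enters an edge must visit at least one of that edge's two integer endpoints, so $B$ intersecting $C'$ anywhere forces an integer point of $B$ to lie in $U$, and passing to the $1$-neighborhood multiplies the $N^4$ bound of Corollary \ref{co4} by at most a constant. You should accordingly replace $p(y):=P[\,y\in\bigcup_{C\in{\mathcal C}(\eps,N)}C\,]$ by the probability that $y$ lies within distance $1$ of some such cluster; the order of magnitude of $\sum_y p(y)$ is unchanged and your conclusion stands.
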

\begin {proof}
By the previous results (and Lemma \ref {Lnotwo} in particular), we only need to focus on the clusters that contain no loop of diameter greater than $\eps N /8$.
It is here useful to recall from Corollary \ref {co4} that the expected number of integer points that do belong to clusters in ${\mathcal C}(\eps, N)$ is bounded by a constant (that depends on $\eps$) times $N^4$.
In particular, if we first only sample the Brownian loops of diameter smaller than $N^\beta$ in the loop-soup (so this is a restricted loop-soup ${\tilde L}$), the number of integer points that belong to a cluster $\tilde C$ that is already in ${\mathcal C}(\eps, N)$ is also upper bounded by the same quantity (as those points will anyway end up in a cluster in ${\mathcal C}(\eps, N)$ when one adds more Brownian loops). Let $U$ denote the set of points that are at distance not greater than $1$ from the union of all ${\tilde C}$'s. The expected number of integer points in $U$ is then bounded by a constant times $N^4$.

On the other hand, for each given integer point in $\Lambda_N$, the probability that it is part of a Brownian loop of diameter greater than $N^\beta$ in the loop-soup is bounded by a constant times $N^{d(1-\beta)} N^{2\beta} / N^d= 1 / N^{\beta (d-2)}$. Using the independence between the set of Brownian loops of diameter smaller than $N^\beta$ and the ones with diameter greater than $N^\beta$ in the loop-soup, we therefore see that the expected number of integer points in $U$ that are also on at least one Brownian loop of diameter in $N^\beta$ is bounded by a constant times
$N^4  / N^{\beta (d-2)}$ that indeed goes to $0$ as $N \to \infty$. But if no loop of diameter greater than $N^\beta$ intersects $U$, it means that in fact, all the clusters $\tilde C$ will remain untouched and be clusters of the total loop-soup. Since they then do not contain any loop of diameter greater than $N^\beta$, this concludes the proof.
\end {proof}

\section {Concluding remarks}
We conclude with the following list of comments:
\begin {enumerate}
\item
It is easy easy to argue that the bound on $\beta$  in the previous section are sharp, i.e., that (with probability that goes to $1$ as $N \to \infty$) for any given $\beta' < 4 / (d-2)$, every cluster in ${\mathcal C}(\eps, N)$ will contain Brownian loops of diameter larger than $N^{\beta'}$. The bound on $\gamma_0$ seems to be sharp as well [i.e., for any $\gamma' < 2 / (d-4)$, loops of diameter smaller than $N^{\gamma'}$ will not be sufficient to create long cycles] but this requires more arguments and  will be the topic of another paper.
\item
In some sense, apart from having the property of containing a large cycle, the structure of the clusters in ${\mathcal C}(\eps, N)$ with no large Brownian loop can be thought of as that of ``typical'' large clusters. In a similar way as in the final Section \ref {S5}, one can see that a ``typical'' large cluster will have backbones created by loops of diameter smaller than $N^{\gamma}$ and will contain no loop of diameter greater than $N^\beta$ [which provides simple back-of-the envelope type heuristic explanations as to where the thresholds for $\beta$ and $\gamma$ come from].
\item One could keep track of bounds of the various probabilities of events in the proof in order to get a quantitative (most likely a negative power of $N$) upper bound for the probabilities such as $u_N$ in the theorem. One could also in the same spirit easily get similar results for ${\mathcal C}(N^{-w}, N)$ instead of ${\mathcal C}( \eps, N)$ provided that $w$ is chosen to be sufficiently small (so, one describes this time a polynomially large number of clusters).
\item Let us consider in $\Lambda_N$ a Poisson point process of Brownian loops, but restricted to the set of loops of diameter smaller than $N^{\gamma}$. Our result shows that for this collection of loops (that are all of size much smaller than $N$),  a tight number of clusters will contain macroscopic cycles, and that these cycles are close to be distributed like those of a Brownian loop-soup in $\Lambda_N$. This suggests that this phenomenon might be valid for any ``short-range'' critical percolation models in high dimensions. This line of thought has been developed in \cite {CW} (and forthcoming papers on the subject) that show that aspects of this indeed hold in those cases for which the two-point estimates asymptotics have been established (so, this includes Bernoulli percolation for $d \ge 11$ or some spread-out percolation models for $d \ge 7$).
\item With some additional work, one could probably use the same ideas to have the very same result for a wider class of ``cycle-containing'' clusters than the ones that avoid and wind around some affine subspace $\Delta$. Since this would not highlight any other class of cycle-containing clusters, we did not bother to do it in the present paper (the rationale being that if a cluster contains a large proper cycle for a modified definition, we would end up showing that it will contain (with positive probability) the trace of a large Brownian loop in the loop-soup, that does anyway (with high probability) contain a large winding around a $(d-2)$-dimensional affine space, so this cluster is in a way already captured by our results about the class ${\mathcal C}(\eps, N)$).
\item For each given Brownian loop $B$ in $\Lambda_N$, one can sample an independent loop-soup in $\Lambda_N$ and then look at the loop-soup cluster $c(B)$ containing $B$ of the overlay of $B$ with this loop-soup. If $B$ is defined according to the Brownian loop-measure, then this procedure defines a measure on clusters $c(B)$. Similarly, for any fixed $\eps$, if $B$ is defined according to the Brownian loop-measure restricted to ${\mathcal B}(\eps, N)$, this defines a measure $\rho_{\eps}$ on the set of clusters in $\Lambda_N$. The fact that (with probability that goes to $1$ as $N \to \infty$) no macroscopic loop-cluster contains more than one loop in ${\mathcal B}(\eps, N)$ and dimension considerations readily imply that when $d \ge 9$ (up to an event of probability that vanishes as $N \to \infty$), the collection of clusters that do contain loops in ${\mathcal B}(\eps, N)$ is distributed like a Poisson point process with intensity $\rho_{\eps}$ (and that the clusters in this Poisson point process will be disjoint). Combined with our theorem, this actually indicates that the clusters in ${\mathcal C}(\eps, N)$ are (asymptotically) distributed like a Poisson point process with intensity $2 \rho_\eps$, and that all these clusters on cable-graphs will be disjoint [mind that we are here discussing about the point process of entire clusters, not just the loops, which explains the $d \ge 9$ constraint].
\item We have stated all the result in the paper for the $\Z^d$ and its cable-graph, but the results and the proofs are obviously still valid for other $d$-dimensional lattices (as long as the Brownian motion on the cable-graph converges to continuum Brownian motion in the scaling limit).
\item
We can note that in the scaling limit (see \cite {LT}), rescaled large Brownian loops on the cable-graph $\Lambda_N$ converge to continuum Brownian loops in $[-1,1]^d$, so that the theorem implies that the rescaled collection $(\Gamma(C_1)/N, \ldots, \Gamma (C_{K_\eps})/N)$ of cycles of clusters in ${\mathcal C}(\eps, N)$ will converge to the corresponding Poissonian collection of Brownian loops (i.e., that wind around some $\Delta$ while staying at distance at least $\eps$ from it), when the intensity of the loop-soup is twice that of the usual one.
\item
As mentioned in Section \ref {LSdef}, instead of working with the loop-soup in $\Lambda_N$, one could work with the loop-soup in $\Z^d$  and consider the collection of all large cycles (i.e., of diameter at least $\eps_0 N$) that are subsets of $\Lambda_N$ and obtain similar results. Let us briefly indicate one way to proceed: The probability that there exists a Brownian loop in the loop-soup that intersects $\Lambda_N$ and goes out of $\Lambda_{mN}$ goes to $0$ as $m \to \infty$ uniformly with respect to $N$. So, for any very small $\upsilon$, we can first choose $m=m(\upsilon)$ so that this probability is smaller than $\upsilon/2$ for all large $N$, and then note that for $N$ large enough the quantity $u_{mN}$ in Theorem \ref {thm} for $\eps = \eps_0 / m$ is smaller than $\upsilon/2$.
This readily shows that large cycles (of diameter at least $\eps N$) within $\Lambda_N$ within the whole-space loop-soup clusters correspond either to a chain of small loops (of diameter smaller than $N^\beta$) or to the presence of a macroscopic Brownian loop with respective conditional probabilities that are $2\upsilon/2 = \upsilon$ close to $1/2$.
\end {enumerate}

\subsection*{Acknowledgments.} The research of WW has been supported by a Research Professorship of the Royal Society.


\begin{thebibliography}{99}


\bibitem {Aid1}
Elie A\"\i d\'ekon.
Cluster explorations of the loop soup on a metric graph related to the Gaussian free field.
arXiv:2009.05120

\bibitem {Ai}
Michael Aizenman.
On the number of incipient spanning clusters.
Nuclear Phys. B
 485, 551-582, 1997.

\bibitem {vdB}
Jacob van den Berg. A note on disjoint-occurrences inequalities for marked
Poisson point processes, J. Appl. Prob. 33, 420–426, 1986.

\bibitem {BFS}
David Brydges, J\"urg Fr\"ohlich, and Tom Spencer.
The random walk representation of classical spin systems and correlation inequalities.
        Commun. Math. Phys. 83, 123-150, 1982.

\bibitem {CD1}
Zhenhao Cai and Jian Ding.
One-arm exponent of critical level-set for metric graph Gaussian free field in high dimensions.
Probab. Th. rel Fields 191, 1035-1120, 2025.

\bibitem {CD0}
Zhenhao Cai and Jian Ding.
One-arm Probabilities for metric graph Gaussian free fields below and at the Critical dimension.
arXiv 2406.02397, 2024.

\bibitem {CD00}
Zhenhao Cai and Jian Ding.
{Quasi-multiplicativity and regularity for critical metric graph Gaussian free fields},
      arXiv 2412.05706, 2024.

\bibitem {CD3}
Zhenhao Cai and Jian Ding.
Incipient infinite clusters and self-similarity for metric graph Gaussian free fields and loop soups,
arXiv 2412.05709, 2024.

\bibitem {CD2}
Zhenhao Cai and Jian Ding.
Separation and cut edge in macroscopic clusters for metric graph Gaussian Free Fields,
arXiv, 2025.

\bibitem {CW}
Amelia Carpenter and Wendelin Werner.
On loops in critical  high-dimensional percolation.
arXiv, 2025.

\bibitem {DW}
Jian Ding and Mateo Wirth. Percolation for level-sets of Gaussian free fields on metric graphs.
Ann. Probab. 48, 1411–1435, 2020.

\bibitem {DPR0}
Alex Drewitz, Alexis Pr\'evost, and Pierre-Fran\c cois Rodriguez.
Cluster capacity functionals and isomorphism theorems for Gaussian free fields,
Probab. Theory rel. Fields 183, 255-313, 2022.

\bibitem {DPR1}
Alex Drewitz, Alexis Pr\'evost, and Pierre-Fran\c cois Rodriguez.
Critical exponents for a percolation model on
transient graphs. Inventiones Math. 232, 229–299, 2023.


\bibitem {DPR2}
Alex Drewitz, Alexis Pr\'evost, and Pierre-Fran\c cois Rodriguez.
Critical one-arm probability for the metric
Gaussian free field in low dimensions.  arXiv:2405.17417, 2024.


\bibitem {DPR} Alex Drewitz, Alexis Pr\'evost, and Pierre-Fran\c cois Rodriguez.
Cluster volumes for the Gaussian free field on metric graphs. arXiv2412.06772, 2024.


\bibitem {GJ}
Shirshendu Ganguly and Kaihao Jing.
Critical level set percolation for the GFF in $d>6$: comparison principles and some consequences.
arXiv, 2024.


\bibitem {GN}
 Shirshendu Ganguly and Kyeongsik Nam. The ant on loops: Alexander-Orbach conjecture for the critical
level set of the Gaussian free field. arXiv:2403.02318, 2024.

\bibitem {HvdH}
Markus Heydenreich and Remco van der Hofstad. Progress in high-dimensional percolation
and random graphs. CRM Short Courses. Springer, 2017.

\bibitem {L}
Gregory F. Lawler.
Topics in loop measures and the loop-erased walk.
Probab. Surveys 15, 28-101, 2018.

\bibitem {LT}
Gregory F. Lawler and Jos\'e A. Trujillo Ferreras.
Random Walk Loop Soup.
Trans. Amer. Math. Soc.
 359, 767-787,  2007.

\bibitem{LW}
Gregory~F. Lawler and Wendelin Werner.
\newblock The {B}rownian loop soup.
\newblock {Probab. Th. rel. Fields} 128, 565-588, 2004.


\bibitem{LJ1}
Yves Le~Jan.
\newblock Markov loops and renormalization.
\newblock {Ann. Probab.} 38, 1280-1319, 2010.

\bibitem{LJ2}
Yves Le~Jan.
\newblock {\em Markov paths, loops and fields}. Lecture Notes in Math. 2026,
\newblock Springer, 2011.

\bibitem {LJ3}
Yves Le Jan.
{\em Random Walks and Physical Fields.}
Springer, 2024.

\bibitem{L1}
Titus Lupu.
\newblock From loop clusters and random interlacements to the free field.
\newblock {Ann. Probab.} 44, 2117-2146, 2016.

\bibitem{L2}
Titus Lupu.
\newblock Convergence of the two-dimensional random walk loop-soup clusters to
  {CLE}.
\newblock {J. Eur. Math. Soc.} 21, 1201-1227, 2019.


\bibitem {L3}
Titus Lupu,
An equivalence between gauge-twisted and topologically conditioned scalar Gaussian free fields.
   Ann. IHP Probab. Stat., to appear.

\bibitem {LST}
Titus Lupu, Christophe Sabot, and Pierre Tarr\`es.
Inverting the coupling of the signed Gaussian free field with a loop-soup.
        Electron. J. Probab. 24,  paper 70, 2019.

  \bibitem {LW1}
  Titus Lupu and Wendelin Werner:
  A note on Ising random currents, Ising-FK, loop-soups and the Gaussian free field.
  Electr. Comm.  Probab.  21, paper 13,  2016.

 \bibitem {LW3}
 {Titus Lupu and Wendelin Werner.}
The random pseudo-metric on a graph defined via the zero-set of the Gaussian free field on its metric graph.
Probab. Theory  rel. Fields 171, 775-818, 2018.


\bibitem {P}
Alexis Pr\'evost.
Percolation for the Gaussian free field on the cable system: counterexamples.
Electron. J. Probab. 28, paper 62 (2023).


\bibitem{SW}
Scott Sheffield and Wendelin Werner.
\newblock Conformal loop ensembles: the {M}arkovian characterization and the
  loop-soup construction.
\newblock {Ann. of Math. (2)} 176, 1827-1917, 2012.

\bibitem {Sy}
Kurt Symanzik.
Euclidean quantum field theory. In: {\em Local
quantum theory. (ed. Jost)}, Academic
Press, 1969.

\bibitem{W1}
Wendelin Werner.
\newblock On the spatial {M}arkov property of soups of unoriented and oriented
  loops.
\newblock In {S\'eminaire de {P}robabilit\'es {XLVIII}},
  {L.N. in Math.} 2168, 481-503. Springer, 2016.

\bibitem {W2}
  Wendelin Werner.
  \newblock On clusters of Brownian loops in $d$ dimensions.
  \newblock In {\em In and out of equilibrium 3. Celebrating Vladas Sidoravicius}, Progr. Probab. 77,  797–817,
Birkhäuser/Springer, 2021.

\bibitem {W3}
Wendelin Werner.
A switching identity for cable-graph loop soups and Gaussian free fields.
arXiv, 2025.

\bibitem{WP}
Wendelin Werner and Ellen Powell.
\newblock {\em Lecture notes on the {G}aussian free field}.
  Cours Sp\'{e}cialis\'{e}s 28.
\newblock Soc. Math. France,  2021.

\end{thebibliography}
\end{document}